\newcommand{\GG}[1]{}
\newcommand{\noop}[1]{}
\newtheorem{theorem}{Theorem}[section]
\newtheorem{lemma}[theorem]{Lemma}
\newtheorem{proposition}[theorem]{Proposition}
\newtheorem{corollary}[theorem]{Corollary}
\newtheorem{predefinition}[theorem]{Definition}
\newtheorem{preremark}[theorem]{Remark}
\newtheorem{prenotation}[theorem]{Notation}
\newtheorem{preexample}[theorem]{Example}
\newtheorem{preclaim}[theorem]{Claim}
\newtheorem{prequestion}[theorem]{Question}
\DeclareMathOperator{\Gal}{Gal}
\DeclareMathOperator{\ord}{ord}
\DeclareMathOperator{\frakp}{\mathfrak{p}}
\DeclareMathOperator{\Gg}{G_{glob}}
\DeclareMathOperator{\Gl}{G_{loc}}
\DeclareMathOperator{\cl}{Cl}
\DeclareMathOperator{\kg}{\kappa_{glob}}
\DeclareMathOperator{\kl}{\kappa_{loc}}
\DeclareMathOperator{\Fg}{F_{glob}}
\DeclareMathOperator{\Fl}{F_{loc}}
\DeclareMathOperator{\Hg}{H_{glob}}
\DeclareMathOperator{\Hl}{H_{loc}}
\DeclareMathOperator{\dl}{\delta_{loc}}
\DeclareMathOperator{\dg}{\delta_{glob}}
\let\div\relax
\DeclareMathOperator{\div}{div}
\newcommand{\kn}{\mathfrak{K}}
\newcommand{\bbF}{\mathbb{F}}
\newcommand{\bbZ}{\mathbb{Z}}
\newcommand{\bbA}{\mathbb{A}}
\newcommand{\cO}{\mathcal{O}}
\newcommand{\frp}{\mathfrak{p}}
\newcommand{\frq}{\mathfrak{q}}
\newcommand{\frr}{\mathfrak{r}}
\newcommand{\frn}{\mathfrak{n}}
\newcommand{\fra}{\mathfrak{a}}
\title[The Hasse Norm Principle in Function Fields]{The Hasse Norm Principle in Global Function Fields}
\date{}
\author[M\^{a}nz\u{a}\c{t}eanu]{Adelina M\^{a}nz\u{a}\c{t}eanu}
\address{Adelina M\^{a}nz\u{a}\c{t}eanu\\ Mathematisch Instituut\\ Niels Bohrweg 1, 2333 CA Leiden, Netherlands}
\email{m.manzateanu@math.leidenuniv.nl}
\author[Newton]{Rachel Newton}
\address{Rachel Newton\\ Department of Mathematics and Statistics\\ University of Reading\\ White\-knights\\ PO Box 220\\ Reading RG6 6AX\\ UK}
\email{r.d.newton@reading.ac.uk}
\author[Ozman]{Ekin Ozman}
\address{Ekin Ozman, Bogazici University, Faculty of Arts and Sciences, Bebek, Istanbul, 34342, Turkey}
\email{ekin.ozman@boun.edu.tr}
\author[Sutherland]{Nicole Sutherland}
\address{Nicole Sutherland, Computational Algebra Group, School of Mathematics and Statistics, The University of Sydney, 2006, Australia}
\email{nicole.sutherland@sydney.edu.au}
\author[Uysal]{Rabia G\"ul\c{s}ah Uysal}
\address{Rabia G\"ul\c{s}ah Uysal, Department of Mathematics, Middle East Technical University, Ankara, 06800, Turkey}
\email{gulsah.uysal@metu.edu.tr}
\begin{document}

\begin{abstract} 
Let $L$ be a finite extension of $\bbF_q(t)$.
We calculate the proportion of polynomials of degree $d$ in $\bbF_q[t]$ that are everywhere locally norms from $L/\bbF_q(t)$ which fail to be global norms from $L/\bbF_q(t)$.
\end{abstract}

\keywords{Local-global principle, global function field, knot group} 

\subjclass[2020]{11N45, 11R58 (primary), 11R37, 14G12, 11G35 (secondary)}


\maketitle

\section{Introduction} 
\label{Introduction}

The Hasse norm principle is said to hold for an extension of global fields $L/k$ if the knot group
 \[\kn(L/k) =\frac{k^{{\times}} \cap N_{L/k} \bbA_L^\times}{ N_{L/k} L^{{\times}}}\]
is trivial, in other words if an element of $k^\times$ is a global norm from $L/k$ if and only if it is a norm everywhere locally. Hasse's original norm theorem \cite{Hasse} shows that the Hasse norm principle holds for cyclic extensions of number fields. Since then, there have been several research articles giving methods for computing knot groups and sufficient criteria for the Hasse norm principle to hold, see \cite{Bae_Jung, Bartels1, Bartels2, Drakokhrust_Platonov, Gerth, Gurak1, Gurak2, Horie, Hoshi_Kanai_Yamasaki, Kagawa, MacedoAn, Macedo_Newton, Razar, DashengWei}, for example. 
Furthermore, new breakthroughs obtained when studying arithmetic objects in families mean there has been a great deal of interest in the frequency of failure of local-global principles -- see \cite{browning_survey} for a survey of recent progress. In particular, the frequency of failure of the Hasse norm principle for number fields has been studied in \cite{browning_newton, FLN, FLN2, MacedoD4, Rome}. 

In this paper, we study failures of the Hasse norm principle in the global function field setting. Let $q$ be a power of a prime $p$, let $L/\bbF_q(t)$ be a finite extension with full constant field $\bbF_{q^f}$ and let $\frn\subset\bbF_q[t]$ be an ideal. In order to compare the number of global norms from $L/\bbF_q(t)$ with the number of everywhere local norms, we define counting functions
\begin{eqnarray*}
N_{\textup{glob}}(L/\bbF_q(t),\frn, d)&=&\#\{\alpha\in \bbF_q[t] \cap N_{L/\bbF_q(t)}L^\times\mid (\alpha,\frn)=1, \deg \alpha=d\},\ \ \textrm{  and}\\
N_{\textup{loc}}(L/\bbF_q(t),\frn, d)&=&\#\{\alpha\in  \bbF_q[t]\cap N_{L/\bbF_q(t)}\bbA_L^\times\mid (\alpha,\frn)=1,  \deg \alpha=d\}.
\end{eqnarray*}

The following constant will play an important role in our results:
\begin{equation} \label{eq:h}
h=\gcd\{\deg \frp \mid \frp \textrm{ infinite place of } L \}.
\end{equation}

We may now state our main theorem:
\begin{theorem}\label{thm:main}
We have
\[\lim_{\substack{d\to \infty \\ fh\mid d}}\frac{N_{\textup{glob}}(L/\bbF_q(t),\frn, d)}{N_{\textup{loc}}(L/\bbF_q(t),\frn, d)}=\frac{1}{\# \mathfrak{K}(L/\bbF_q(t))}, \]
where the limit is taken over degrees $d$ such that $fh\mid d$.
\end{theorem}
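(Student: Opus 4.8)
The plan is to establish the equidistribution of the knot-group classes $[\alpha]\in\mathfrak{K}(L/\bbF_q(t))$ as $\alpha$ ranges over the everywhere-locally-norm polynomials of degree $d$, and to read this off from the singularities of suitable generating series. Write $k=\bbF_q(t)$.

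\emph{Reduction to character sums.} An element of $k^{\times}$ is a global norm from $L$ precisely when it is everywhere locally a norm and its class in $\mathfrak{K}(L/k)$ vanishes. Let $A_d\subseteq\bbF_q[t]$ be the set of monic polynomials of degree $d$, coprime to $\frn$, lying in $N_{L/k}\bbA_L^{\times}$, so that $N_{\textup{loc}}(L/k,\frn,d)=\#A_d$ and, by orthogonality of characters of the finite group $\mathfrak{K}(L/k)$,
\[ N_{\textup{glob}}(L/k,\frn,d)=\frac{1}{\#\mathfrak{K}(L/k)}\sum_{\chi\in\mathfrak{K}(L/k)^{\vee}}\ \sum_{\alpha\in A_d}\chi([\alpha]). \]
The term $\chi=1$ contributes $N_{\textup{loc}}(L/k,\frn,d)/\#\mathfrak{K}(L/k)$, so Theorem~\ref{thm:main} is equivalent to the estimate $\sum_{\alpha\in A_d}\chi([\alpha])=o\bigl(N_{\textup{loc}}(L/k,\frn,d)\bigr)$ as $d\to\infty$ with $fh\mid d$, for every nontrivial $\chi$.

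\emph{Local description.} By local class field theory, $\alpha$ is everywhere locally a norm iff $\alpha\in N_v:=\prod_{w\mid v}N_{L_w/k_v}L_w^{\times}$ for every place $v$ of $k$; this is open of finite index, equal to $\langle\pi_v^{m_v}\rangle\mathcal{O}_v^{\times}$ with $m_v=\gcd_{w\mid v}f(w/v)$ whenever $v$ is finite, unramified, and prime to $\frn$, while at the infinite place the condition $\alpha\in N_\infty$ forces $\deg\alpha$ to be divisible by $fh$ and imposes a congruence on the top coefficients of $\alpha$. Hence $\mathbf{1}_{A_d}(\alpha)=\mathbf{1}[\,(\alpha,\frn)=1\,]\prod_v\mathbf{1}_{N_v}(\alpha)$ cut out by $\deg\alpha=d$, and each $\mathbf{1}_{N_v}$ expands as $[k_v^{\times}:N_v]^{-1}\sum_{\psi_v}\psi_v$ over the characters $\psi_v$ of $k_v^{\times}/N_v$. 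Dually, a class $\chi\in\mathfrak{K}(L/k)^{\vee}$ is represented by a family $(\chi_v)_v$ of local characters, each trivial on $N_v$ and almost all trivial, with $\chi([\alpha])=\prod_v\chi_v(\alpha)$; since the indeterminacy in $(\chi_v)$ is by global characters, which kill $k^{\times}\hookrightarrow\bbA_k^{\times}$, the product $\Psi_\chi:=\prod_v\chi_v$ descends to a well-defined finite-order idele-class character of $k^{\times}$. I would then show that $\Psi_\chi$ is nontrivial on the monoid of monic polynomials coprime to $\frn$ exactly when $\chi\neq0$; this rests on the fact that every class of $\mathfrak{K}(L/k)$ is represented by such a polynomial, together with the observation that everywhere-local-norm elements have degree divisible by $fh$ (so a ``degree character'' $\alpha\mapsto\zeta^{\deg\alpha}$ can only represent $\chi=0$), and I would prove the realisation statement by a Chebotarev/weak-approximation argument.

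\emph{Generating series and Tauberian extraction, and the main obstacle.} Set $F_\chi(u)=\sum_{\alpha}\bigl(\prod_v\mathbf{1}_{N_v}(\alpha)\bigr)\chi([\alpha])\,u^{\deg\alpha}$, the sum over monic $\alpha$ coprime to $\frn$, so that the coefficient of $u^d$ is $\sum_{\alpha\in A_d}\chi([\alpha])$ (and for $\chi=1$ it is $N_{\textup{loc}}(L/k,\frn,d)$). Substituting the two Fourier expansions and using unique factorisation of monic polynomials turns $F_\chi(u)$ into a finite $\mathbb{C}$-linear combination of Euler products $\prod_v(1-\lambda(v)u^{\deg v})^{-1}$ (for completely multiplicative $\lambda$ built from the $\psi_v\chi_v$), times explicit correction factors at the finitely many ramified or $\frn$-dividing places and at $\infty$ (the last recording the congruence $fh\mid d$ and the top-coefficient condition). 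Each such product is a rational function of $u$; comparing it with the zeta function of $k$ and with that of the smooth projective model of $L$ locates its singularities on the circle $|u|=q^{-1}$. For $\chi=1$ the strongest is of type $(1-qu)^{-\delta}$ with $\delta=\delta(L/k)>0$, together with companion singularities at $q^{-1}\zeta$ for the roots of unity attached to the progression $fh\mid d$; for $\chi\neq1$ the nontriviality of $\Psi_\chi$ cancels the divergent local contributions, leaving $F_\chi$ with strictly weaker singularities on that circle. A Selberg--Delange / Wiener--Ikehara type Tauberian theorem for power series with dominant singularities on $|u|=q^{-1}$ then gives $N_{\textup{loc}}(L/k,\frn,d)\sim c_{\textup{loc}}\,q^{d}d^{\delta-1}$ along $fh\mid d$ with $c_{\textup{loc}}>0$, and $\sum_{\alpha\in A_d}\chi([\alpha])=o(q^{d}d^{\delta-1})$ for $\chi\neq1$; dividing, the factors $q^{d}d^{\delta-1}$ cancel and only the $\chi=1$ contribution $1/\#\mathfrak{K}(L/k)$ survives. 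The hard part will be this last step: passing rigorously from ``$\chi\neq0$'' to a genuine drop in the order of singularity of $F_\chi$ on $|u|=q^{-1}$, with enough uniformity to beat the logarithmically divergent Euler product defining $N_{\textup{loc}}$, which forces one through the realisation lemma, the precise bookkeeping at the infinite place and along the constant-field extension (this is where $fh\mid d$ and the exponent $\delta$ come from), and the refined (branch-point) form of the Tauberian input; the remaining ingredients --- local class field theory, the duality description of $\mathfrak{K}(L/k)$, and the function-field Tauberian theorem --- are essentially standard.
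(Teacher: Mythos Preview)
Your overall plan---show equidistribution of knot-group classes by proving that the nontrivial character sums are $o(N_{\textup{loc}})$---is natural, but the ``local description'' step contains a genuine error. You assert that a character $\chi\in\mathfrak{K}(L/k)^\vee$ can be represented by local characters $(\chi_v)_v$, each trivial on $N_v$, with $\chi([\alpha])=\prod_v\chi_v(\alpha)$. But if $\alpha\in k^\times\cap N_{L/k}\bbA_L^\times$ then by definition $\alpha\in N_v$ for every place $v$, so each $\chi_v(\alpha)=1$ and the product is identically~$1$: your formula only ever produces the trivial character of the knot group. This is not a detail that can be patched locally---the knot group is precisely the global obstruction invisible to all local data, so its nontrivial characters cannot be assembled from characters that already vanish on the local norm groups. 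As written, every $F_\chi$ collapses to $F_1$ and no cancellation occurs; the claimed drop in the order of the singularity for $\chi\neq 1$ therefore has no source.

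The paper circumvents this by never invoking $\mathfrak{K}(L/k)^\vee$ directly. It passes to ideals: one forms the finite groups $\Gg=N_{L/K}I_L/\{(N_{L/K}\alpha):\alpha\in L^\times\}$ and $\Gl=N_{L/K}I_L/\{(\beta):\beta\in K^\times\cap N_{L/K}\bbA_L^\times\}$ and expands the indicator functions $\dg$, $\dl$ over $\Gg^\vee$, $\Gl^\vee$ respectively. Because these are honest ray-class characters of $L$, Cohen--Odoni's analytic machinery applies verbatim to each, yielding separate asymptotics for $N_{\textup{glob}}$ and $N_{\textup{loc}}$. The substantive new input is that the ray class fields $L_{\textup{glob}}\supset L_{\textup{loc}}$ have the \emph{same} full constant field $\bbF_{q^{fh}}$, so both main terms live on the progression $fh\mid d$; dividing gives $(\kg/\kl)\cdot[L_{\textup{glob}}:L_{\textup{loc}}]^{-1}$, and a short exact sequence relating $\ker(\Gg\twoheadrightarrow\Gl)$, $\mathfrak{K}(L/K)$ and the unit quotient $\kl/\kg$ identifies this with $1/\#\mathfrak{K}(L/k)$. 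If you want to salvage your route, the correct replacement for your $(\chi_v)$ is to lift $\chi$ to a character of $\Gg$ trivial on the image of $\Gl$ (handling the residual unit contribution $\kl/\kg$ separately), i.e.\ to work with ideal classes from the start rather than with elements.
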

In the special case $\frn=\bbF_q[t]$, Theorem~\ref{thm:main} is an integral analogue of \cite[Theorem~1.2]{browning_newton} in the function field setting. We note that examples where the knot group is non-trivial certainly exist in this setting: for example, \cite[\S11.4]{Tate} shows that the knot group is $\bbZ/2\bbZ$ for the biquadratic extension $\bbF_5(\sqrt{t},\sqrt{t+1})/\bbF_5(t)$ since all its decomposition groups are cyclic.

    In order to obtain Theorem~\ref{thm:main}, we show that the method of Cohen and Odoni can be used to prove the following local version of \cite[Theorem~IIB]{cohen_odoni}:

\begin{theorem}
\label{IIB_el}
There exists a finite abelian extension $L_{\textup{loc}}/L$ 
with the following properties:
\begin{enumerate}
\item[(a)] if $d$ is a large multiple of $fh$, then $N_{\textup{loc}}(L/\bbF_q(t),\frn, d)$ is asymptotically
\begin{equation}\label{IIB}
h \kl C \frac{q^d d^{B-1}}{[L_{\textup{loc}}:L]} \lambda_{\mathfrak{n}}^{-1} \{1+ O\bigl(d^{-A} \omega^4(\mathfrak{n})\bigr)\} + 
O\Bigl(q^{d/2} e^{2 \sqrt{d \omega(\mathfrak{n})}}\Bigr)
\end{equation}
where $A, B$ and $C$ are positive constants depending only on $L/\bbF_q(t)$ and $0<B<1$, $\omega(\frn)$ is the number of distinct prime divisors of $\frn$ and
   \[\lambda_\frn=\prod_{\frp\mid\frn}\{1+\delta(\frp)q^{-\deg\frp}+\delta(\frp^2)q^{-2\deg\frp}+\dots\}\] 
   where $\delta$ is the indicator function for norms of fractional ideals of $L$, see Section~\ref{sec:pf};
\item[(b)]
if $d$ is not a multiple of $fh$, then $N_{\textup{loc}}(L/\bbF_q(t),\frn, d)$ is only
$$O\bigl(q^d d^{B-1-A} \omega^4(\mathfrak{n})\bigr) + O\Bigl(q^{d/2} e^{2 \sqrt{d \omega(\mathfrak{n})}}\Bigr)$$
\end{enumerate}
where the constants involved in the $O$ symbols may be taken uniform in $d$ and 
$\mathfrak{n}$.
\end{theorem}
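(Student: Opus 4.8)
\emph{Proof plan.} The plan is to run the argument of Cohen and Odoni for \cite[Theorem~IIB]{cohen_odoni}, now with power series over $\bbF_q$ in place of Dirichlet series over a number field and with ``everywhere locally a norm'' in place of ``norm of an integral ideal''. The first step is to make the local condition explicit: using that the norm map is surjective on the units of an unramified extension of local fields, one exhibits a finite set $S$ of places of $\bbF_q(t)$ --- the infinite place, the places dividing $\frn$, and the finitely many places ramified in $L$ --- such that an $\alpha\in\bbF_q[t]$ with $(\alpha,\frn)=1$ is everywhere locally a norm from $L/\bbF_q(t)$ if and only if: (i) $\delta(\frp^{v_\frp(\alpha)})=1$ for every finite $\frp\notin S$, i.e.\ $d_\frp\mid v_\frp(\alpha)$, where $d_\frp$ is the greatest common divisor of the residue degrees of the places of $L$ above $\frp$ (a restriction only at primes dividing $\alpha$); (ii) $fh\mid\deg\alpha$, together with a fixed congruence on $\alpha$ at the infinite place; and (iii) a fixed congruence on $\alpha$ modulo a power of $\frp$ for each finite $\frp\in S$.

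The count is then encoded in a generating function. Writing $N_{\textup{loc}}(L/\bbF_q(t),\frn,d)=(q-1)\,[u^d]F_\frn(u)$, where the factor $q-1$ accounts for leading coefficients and monic polynomials correspond to ideals of $\bbF_q[t]$, the multiplicativity of (i)--(iii) yields an Euler product for $F_\frn(u)$ with geometric factors $\bigl(1-u^{d_\frp\deg\frp}\bigr)^{-1}$ at the finite primes outside $S$ and polynomial factors at the primes of $S$; removing the primes dividing $\frn$ contributes, at the critical value $u=q^{-1}$, the factor $\lambda_\frn^{-1}$, and the congruence conditions at $S\cup\{\infty\}$ are carried along via the corresponding ray-class character sums. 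Comparing $F_\frn$ with the zeta function $\bigl(1-qu\bigr)^{-1}$ of $\bbF_q[t]$ and applying the function field Chebotarev theorem to the Galois closure $M$ of $L/\bbF_q(t)$ and its constant field extension, one shows that $F_\frn$ continues to a disc $|u|<q^{-\theta}$ for some $0<\theta<1$ as a function analytic except for algebraic branch points at $u=q^{-1}\zeta$, where $\zeta$ ranges over a finite set of roots of unity determined by the constant field of $M$; the branch point at $u=q^{-1}$ is of the form $\bigl(1-qu\bigr)^{-B}$, with $B$ the proportion of elements of $G=\Gal(M/\bbF_q(t))$ that fix a point of $G/H$, $H=\Gal(M/L)$ (equivalently, the density of $\frp$ with $d_\frp=1$), so that $0<B<1$ by Jordan's theorem on fixed-point-free elements of transitive permutation groups. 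A singularity analysis --- equivalently, a Tauberian argument for power series as in \cite{cohen_odoni} --- then extracts $[u^d]F_\frn(u)$: the branch points at the various $q^{-1}\zeta$ reinforce to give the asymptotic \eqref{IIB} when $fh\mid d$, and cancel down to the stated bound when $fh\nmid d$, giving part~(b). The finite abelian extension $L_{\textup{loc}}/L$ is the extension determined by $L/\bbF_q(t)$ via class field theory that governs these branch points; together with $h\kl C$ and the polynomial factors at $S$ it forms the constant $h\kl C/[L_{\textup{loc}}:L]$ in \eqref{IIB}, the factor $\lambda_\frn^{-1}$ arising from the removal of the primes dividing $\frn$.

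It remains to obtain the error terms uniformly in $\frn$. Here one argues as in Cohen and Odoni: the dependence on $\frn$ enters only through $\prod_{\frp\mid\frn}(\text{local factor})^{-1}$, and expanding this factor and estimating the resulting Perron-type contour integral about $|u|=q^{-1}$ uniformly in $\frn$ produces the main term with relative error $O(d^{-A}\omega^4(\frn))$ and the secondary error $O\bigl(q^{d/2}e^{2\sqrt{d\omega(\frn)}}\bigr)$, the exponential reflecting a crude upper bound for a divisor-type function supported on the prime divisors of $\frn$.

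I expect the main obstacle to be the combination of the singularity analysis with this uniformity: locating all of the branch points $q^{-1}\zeta$ and proving that their contributions combine to give precisely the $fh\mid d$ dichotomy with the exact leading constant --- which entails pinning down $L_{\textup{loc}}$, $[L_{\textup{loc}}:L]$, $B$ and $C$, and checking that the congruence conditions in (ii) and (iii) do not refine the modulus $fh$ --- while keeping every estimate uniform in both $d$ and $\frn$. Transposing Cohen and Odoni's Tauberian machinery, which is designed for Dirichlet series over number fields, to power series over $\bbF_q$, where one may instead exploit the rationality of the relevant zeta and $L$-functions and argue by contour integration around $|u|=q^{-1}$, is where most of the technical work lies.
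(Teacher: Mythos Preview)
Your overall strategy---encode the count in a generating function, locate the dominant singularities, extract coefficients---is sound, but there is a basic misreading of the reference that distorts the plan. Cohen and Odoni's paper \cite{cohen_odoni} is \emph{already} about global function fields: their Theorems~IIA and~IIB concern polynomials in $\bbF_q[t]$, their generating functions are power series in $t$ (not Dirichlet series over a number field), and their singularity/Tauberian analysis is already carried out in that setting. So there is nothing to ``transpose''; the analytic machinery can be quoted verbatim once the generating function is put in the right form. This changes where the work lies: not in redoing the analysis, but in reducing $f_{\textup{loc}}(\frn,t)$ to objects Cohen and Odoni have already handled.

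The paper does this reduction differently from your proposal, and more cleanly. Rather than describing the everywhere-local-norm condition by congruences at a finite set $S$ and then detecting those congruences by ray-class characters, it first observes (Corollary~\ref{cor:eln}) that any everywhere local norm generates an ideal in $N_{L/K}I_L$, and then defines the subgroup $\Hl\subset I_L$ of ideals whose norm is principal generated by an everywhere local norm. Orthogonality of characters of the finite group $\Gl=N_{L/K}I_L/\{(\beta):\beta\in K^\times\cap N_{L/K}\bbA_L^\times\}\cong I_L/\Hl$ then gives $\dl(\fra)=(\#\Gl)^{-1}\delta(\fra)\sum_{\chi}\chi(\fra)$, so that $f_{\textup{loc}}(\frn,t)$ is a finite sum of the twisted series $f(\frn,t,\chi)$ that Cohen and Odoni analyse in proving their Theorem~IIB. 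The field $L_{\textup{loc}}$ is then the ray class field of $L$ attached to $\Hl$; your description of $L_{\textup{loc}}$ as ``the extension that governs these branch points'' is too vague to serve as a definition.

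Finally, your assertion in~(ii) that the local norm condition at infinity forces $fh\mid\deg\alpha$, and that the branch points combine to give exactly the $fh\mid d$ dichotomy, is precisely what is \emph{not} automatic. In the paper this is the content of Theorem~\ref{thm:constants}: one must show that the full constant field of $L_{\textup{loc}}$ is $\bbF_{q^{fh}}$, which is done by computing $\gcd\{\deg\fra:\fra\in\Hl\}=h$ (Corollary~\ref{cor:h is gcd}) and invoking Theorem~\ref{thm:hes}. Without this step one only knows the modulus is $f\Fl$ for some $\Fl$, not that $\Fl=h$. Your plan should isolate this as the genuine new input beyond \cite{cohen_odoni}. (The reduction to the separable case via Lemmas~\ref{lem:sep} and~\ref{lem:sep2} should also be mentioned, though it is routine.)
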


The constant $\kl$ and its global analogue $\kg$ are defined as follows:
\begin{equation}\label{eq:kappas}
\kl=\#(\mathbb{F}_q^{\times}\cap N_{L/\bbF_q(t)}\mathbb{A}_L^{\times})\ \ \textrm{and}\ 
\kg=\#(\mathbb{F}_q^{\times}\cap N_{L/\bbF_q(t)}L^{\times}).
\end{equation}

One key difference with the number field case handled in \cite{browning_newton} is the special role played by the constant fields in the function field setting. A key step in our proof of Theorem~\ref{thm:main} is to show that $L_{\textup{loc}}$ and its global analogue $L_{\textup{glob}}$ both have full constant field $\bbF_{q^{fh}}$. This is achieved in Theorem~\ref{thm:constants} using the following result, which is
proved in Section~\ref{sec:constant}:

\begin{theorem}\label{thm:hes}
Let $F$ be a global function field with full constant field $\bbF_q$, let $\mathfrak{m}$ be an effective divisor of $F$ and let $H$ be a finite index subgroup of the 
ray class group $\cl_{\mathfrak{m}}(F)$. Then the ray class field corresponding to $H$ has full constant field $\bbF_{q^r}$, where $r$ is the smallest positive degree of a divisor in $H$.
\end{theorem}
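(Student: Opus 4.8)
The plan is to read off the full constant field of the ray class field from Artin reciprocity, using the elementary fact that the Artin symbol of a place, when restricted to a constant field extension, depends only on the degree of that place. Write $\cl_{\mathfrak{m}}:=\cl_{\mathfrak{m}}(F)$ and let $F_H/F$ be the finite abelian ray class field attached to $H$, so that the Artin map gives an isomorphism $\cl_{\mathfrak{m}}/H\xrightarrow{\ \sim\ }\Gal(F_H/F)$ carrying the class of a place $\frp\nmid\mathfrak{m}$ to $\Frob_\frp$. The degree map on divisors descends to a homomorphism $\deg\colon\cl_{\mathfrak{m}}\to\bbZ$ with nonzero --- hence infinite --- image, so $\ker(\deg)$ has infinite index in $\cl_{\mathfrak{m}}$; as $H$ has finite index, $\deg(H)\neq 0$, say $\deg(H)=r\bbZ$, where $r\geq 1$ is exactly the smallest positive degree of a divisor whose class lies in $H$.

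The heart of the argument is to identify, for each $n\geq 1$, the subgroup of $\cl_{\mathfrak{m}}$ cut out by the constant field extension $F\bbF_{q^n}$. This extension is unramified at every place of $F$ (so its conductor divides $\mathfrak{m}$ and it is contained in the relevant ray class field), it has degree exactly $n$ because $F$ has full constant field $\bbF_q$, and $\Gal(F\bbF_{q^n}/F)\cong\bbZ/n\bbZ$ is generated by the $q$-power Frobenius $\phi$ on constants. Since the residue field of $F$ at $\frp$ is $\bbF_{q^{\deg\frp}}$, the Frobenius $\Frob_\frp$ acts on $\bbF_{q^n}$ as $\phi^{\deg\frp}$; hence the Artin map $\cl_{\mathfrak{m}}\to\Gal(F\bbF_{q^n}/F)$ is the composite of $\deg$ with reduction modulo $n$, and its kernel is $\deg^{-1}(n\bbZ)$. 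In other words, $F\bbF_{q^n}$ is the class field of the finite-index subgroup $\deg^{-1}(n\bbZ)\leq\cl_{\mathfrak{m}}$.

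With this identification in hand, I would deduce the theorem from the inclusion-reversing dictionary between subextensions of a ray class field and finite-index subgroups of the ray class group, applied twice. On one hand, $\deg(H)=r\bbZ$ gives $H\subseteq\deg^{-1}(r\bbZ)$, hence $F\bbF_{q^r}\subseteq F_H$, so the full constant field of $F_H$ contains $\bbF_{q^r}$. On the other hand, writing $\bbF_{q^s}$ for the full constant field of $F_H$, the inclusion $F\bbF_{q^s}\subseteq F_H$ forces $H\subseteq\deg^{-1}(s\bbZ)$, i.e.\ $r\bbZ=\deg(H)\subseteq s\bbZ$, so $s\mid r$; combined with $r\mid s$ this gives $s=r$. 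I expect the only genuine subtlety to be the computation in the middle paragraph: one must carry it out inside the ray class group $\cl_{\mathfrak{m}}$ itself --- not the id\`ele class group or the divisor class group $\cl_{0}$ --- and keep careful track of the compatibility of the various Artin maps, after which the conclusion is a formal consequence of the class field theory correspondence.
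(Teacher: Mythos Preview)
Your proof is correct and follows essentially the same strategy as the paper: both identify the Artin map for a constant field extension $F\bbF_{q^n}/F$ with the degree map reduced modulo $n$ (the paper quotes this as a lemma of Hess--Massierer) and then compare $H$ with $\deg^{-1}(n\bbZ)$ via the class-field-theory correspondence. The only minor difference is that for the inclusion $\bbF_{q^r}\subset F_H$ the paper argues via Chebotarev---showing that every place in $H$ splits completely in $F\bbF_{q^r}/F$---whereas you obtain it directly from $H\subseteq\deg^{-1}(r\bbZ)$ and the inclusion-reversing dictionary; your route is slightly more streamlined, but the underlying idea is the same.
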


To obtain an analogue of Theorem~\ref{thm:main} for rational functions rather than polynomials, one would need to handle sums over fractional ideals written as quotients of coprime integral ideals, in a similar fashion to what was done at the bottom of p.343 of \cite{browning_newton}. The appearance of $\omega(\frn)$ in the error terms of \eqref{IIB} means that these error terms would need to be handled carefully, but we believe this should be possible with some work.

\subsection*{Acknowledgements}
This project began at the \emph{Women in Numbers Europe 3} workshop in Rennes, August 2019. We are grateful to the organisers for bringing us together and providing us with an excellent working environment to get this project underway.
 We thank Alp Bassa, Titus Hilberdink, Yiannis Petridis and Efthymios Sofos for useful discussions. 
{\sc Magma}~\cite{magma224} was used to investigate examples. Rachel Newton is supported by EPSRC grant EP/S004696/1.

\section{Reducing to the separable case} \label{Lemmas}
One major difference between the function field setting and the number field setting is the presence of inseparable 
extensions in the function field case. Fortunately, Cohen and Odoni \cite{cohen_odoni} give the following lemma allowing us to reduce to the case of a separable extension:

\begin{lemma}[{\cite[Lemma~1.1]{cohen_odoni}}]\label{lem:sep}
	Let $F$ be a perfect field of characteristic $p\neq0$. If $t$ is an indeterminate and L is a finite extension of $F(t)$ of degree of inseparability $p^{i}$, then $L=KM$, where $K=F(t^{p^{-i}})$ and $M$ is the maximal subfield of $K$ separable over $F(t)$; in particular, $L/K$ is separable.
	\end{lemma}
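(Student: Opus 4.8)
The plan is to fix an algebraic closure of $F(t)$ and to let $M$ be the separable closure of $F(t)$ in $L$ (here $M$ denotes the maximal subfield of $L$ — rather than of $K$ — that is separable over $F(t)$), so that $M/F(t)$ is separable and $L/M$ is purely inseparable; since $M/F(t)$ is separable, $[L:M]$ equals the degree of inseparability $p^{i}$ of $L/F(t)$. As $F$ is perfect, $X^{p^{i}}-t$ is the minimal polynomial of $t^{p^{-i}}$ over $F(t)$, so $K=F(t^{p^{-i}})$ has degree $p^{i}$ over $F(t)$, and $KM=M(t^{p^{-i}})$ is separable over $K$ because $M/F(t)$ is. Everything therefore reduces to proving the identity $L=M(t^{p^{-i}})$.

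Next I would record two facts about $M$. Since $[M:M^{p}]=[F(t):F(t)^{p}]=p$ (the $p$-degree is preserved under finite extensions, and $F^{p}=F$), and $t\notin M^{p}$ because $t^{p^{-1}}$ is purely inseparable over $F(t)$ while $M/F(t)$ is separable, the set $\{1,t,\dots,t^{p-1}\}$ is an $M^{p}$-basis of $M$, the polynomial $X^{p^{n}}-t$ is irreducible over $M$ for every $n$, and (applying the same remarks to $M(t^{p^{-1}})/F(t^{p^{-1}})$, which is separable by linear disjointness, and iterating) the perfect closure of $M$ is $\bigcup_{n\ge 0}M(t^{p^{-n}})$, with $[M(t^{p^{-n}}):M]=p^{n}$. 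Since $L/M$ is purely inseparable and finite, $L\subseteq M(t^{p^{-n}})$ for some $n$.

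The core of the argument is the claim that the only intermediate fields of $M(t^{p^{-n}})/M$ are the $M(t^{p^{-j}})$ with $0\le j\le n$. Given an intermediate field $E$, let $j$ be the largest index with $t^{p^{-j}}\in E$ and suppose for contradiction that $E\supsetneq M(t^{p^{-j}})$. As $E/M(t^{p^{-j}})$ is purely inseparable, one may pick $x\in E\setminus M(t^{p^{-j}})$ with $x^{p}\in M(t^{p^{-j}})$. Writing $\theta=t^{p^{-n}}$ and expanding $x=\sum_{k=0}^{p^{n-j}-1}c_{k}\theta^{k}$ in the $M(t^{p^{-j}})$-basis $1,\theta,\dots,\theta^{p^{n-j}-1}$ of $M(t^{p^{-n}})$, one raises to the $p$-th power, reduces the exponents of $\theta$ modulo $p^{n-j}$ using $\theta^{p^{n-j}}=t^{p^{-j}}$, and then uses that $\{1,t^{p^{-j}},\dots,(t^{p^{-j}})^{p-1}\}$ is a basis of $M(t^{p^{-j}})$ over its subfield of $p$-th powers (the same reasoning as for $M$) to force $c_{k}=0$ unless $p^{\,n-j-1}\mid k$; this says exactly that $x\in M(t^{p^{-(j+1)}})$. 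Then $M(t^{p^{-(j+1)}})=M(t^{p^{-j}})(x)\subseteq E$ contradicts the maximality of $j$. Applying the claim to $L$ gives $L=M(t^{p^{-j}})$ for some $j$, and $[L:M]=p^{i}$ forces $j=i$, so $L=M(t^{p^{-i}})=KM$ with $L/K$ separable, as required.

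The routine portion is the verification of the two $p$-basis facts in the second and third paragraphs; the step I expect to be the main obstacle is the bookkeeping in the intermediate-field computation — correctly tracking how the exponents of $\theta$ fold modulo $p^{n-j}$ after taking $p$-th powers and then matching the outcome against the two distinct $p$-basis structures (of $M$ over $M^{p}$, and of $M(t^{p^{-j}})$ over its $p$-th powers) without index slips.
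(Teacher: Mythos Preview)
The paper does not give its own proof of this lemma: it is quoted verbatim from Cohen--Odoni and used as a black box, so there is nothing to compare against at the level of argument. Your proposal is therefore not a reconstruction of the paper's reasoning but an independent proof, and as such it is essentially correct. You rightly flag the typo in the statement ($M$ should be the maximal subfield of $L$, not of $K$, separable over $F(t)$), and your reduction to the identity $L=M(t^{p^{-i}})$ via the separable--purely inseparable factorisation is the natural one.

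Your intermediate-field computation works, but it is more laborious than necessary. Once you have established that $[M:M^{p}]=p$ with $t\notin M^{p}$, you know that $M^{1/p}=M(t^{1/p})$ (both have degree $p$ over $M$ and one contains the other), and by iteration $M^{1/p^{j}}=M(t^{p^{-j}})$ for every $j$. Now $L/M$ is purely inseparable of degree $p^{i}$, so $L^{p^{i}}\subseteq M$, whence $L\subseteq M^{1/p^{i}}=M(t^{p^{-i}})$; comparing degrees gives $L=M(t^{p^{-i}})$ directly, with no need to classify all intermediate fields or to do the explicit $\theta$-expansion. This shortcut replaces the third paragraph entirely and avoids the bookkeeping you yourself identified as the main hazard.
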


  Lemma~\ref{lem:sep2} below allows us to transport the property of being a (global or everywhere local) norm from $L/\bbF_q(t)$ to the separable extension $L/K$ given by Lemma~\ref{lem:sep} and back.  Before stating it, we explain what we mean by a fractional ideal of $L$ and describe the correspondence between fractional ideals and finite divisors.

Let $L/\bbF_q(t)$ be a finite extension. Write $\cO_L$ for the integral closure of $\bbF_q[t]$ in $L$. Note that, unlike in the number field case, $\cO_L$ is not canonical -- it depends on a choice of generator $t$ for $\bbF_q(t)/\bbF_q$. We consider the choice of generator $t$ to be fixed throughout this paper. By a fractional ideal of $L$, we mean a fractional ideal of $\cO_L$. For $\alpha\in L^\times$, we write $(\alpha)$ for the principal fractional ideal of $\cO_L$ generated by $\alpha$.

The infinite place of $\bbF_q(t)$ corresponds to the valuation $\ord_\infty$ on $\bbF_q(t)$ given by $\ord_\infty\bigl(\frac{f(t)}{g(t)}\bigr)=\deg g(t)-\deg f(t)$ for $f(t), g(t)\in\bbF_q[t]$. In other words, the infinite place of $\bbF_q(t)$ corresponds to the prime ideal generated by $\frac{1}{t}$ in $\bbF_q[\frac{1}{t}]$. We write $\infty$ for the infinite place of $\bbF_q(t)$. 

Let $D(L)$ denote the group of divisors of $L$ and let $D_\infty(L)$ denote the subgroup of finite divisors, meaning those whose support does not include any place above $\infty$. We identify the finite places of $L$ with the nonzero prime ideals of $\cO_L$ (see \cite[\S5.2]{koch}, for example). Thus, since $\cO_L$ is a Dedekind domain, the map
\[\sum_i a_i \frp_i \to \prod_i \frp_i^{a_{i}} \]
allows us to identify $D_{\infty}(L)$ with the multiplicative group of nonzero fractional ideals of $\cO_L$, which we will denote by $I_L$. Having made this identification, we will refer to the degree of a fractional ideal, meaning the degree of the associated divisor.

\begin{lemma} \label{lem:sep2}
Let $L/\bbF_q(t)$ be a finite extension of degree of inseparability $p^{i}$, let $K=\bbF_q(t^{p^{-i}})$ and
let $\alpha\in \bbF_q(t)$. Then
	\begin{enumerate}
		\item \label{ideal} the fractional ideal $(\alpha)$ of $\bbF_q[t]$ is the $L/\bbF_q(t)$ norm of some fractional ideal of $\cO_L$ if and only if the fractional ideal $(\alpha^{p^{-i}})$ of $\cO_K$ is the $L/K$ norm of some fractional ideal of $\cO_L$;
		\item \label{glob} $\alpha\in N_{L/\bbF_q(t)} L^{{\times}}$ if and only if $\alpha^{p^{-i}} \in N_{L/K} L^{{\times}}$;
		\item \label{loc}  $\alpha\in N_{L/\bbF_q(t)} \bbA_{L}^{{\times}} $ if and only if $\alpha^{p^{-i}}\in N_{L/K} \bbA_{L}^{{\times}} $.
			\end{enumerate}
\end{lemma}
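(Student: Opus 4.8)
The plan is to work with the tower $\bbF_q(t)\subseteq K\subseteq L$ from Lemma~\ref{lem:sep}, in which $K/\bbF_q(t)$ is purely inseparable of degree $p^i$ while $L/K$ is separable. Two standard facts drive everything: (i) norms are transitive in towers, so $N_{L/\bbF_q(t)}=N_{K/\bbF_q(t)}\circ N_{L/K}$ simultaneously on multiplicative groups, on groups of fractional ideals, and on idele groups; and (ii) for a purely inseparable extension $E/F$ of degree $p^j$ one has $N_{E/F}(\beta)=\beta^{p^j}$ for every $\beta\in E$. I would also record that, since $\bbF_q$ is perfect and $(t^{p^{-i}})^{p^i}=t$, the $p^i$-power map $\phi\colon x\mapsto x^{p^i}$ carries $K=\bbF_q(t^{p^{-i}})$ isomorphically onto $K^{p^i}=\bbF_q(t)$; hence for $\alpha\in\bbF_q(t)$ the element $\alpha^{p^{-i}}:=\phi^{-1}(\alpha)$ is a well-defined element of $K$, and $\phi^{-1}(\bbF_q[t])=\bbF_q[t^{p^{-i}}]$ is exactly $\cO_K$ (a PID, integral over $\bbF_q[t]$; and $\cO_L$ is at the same time the integral closure of $\cO_K$ in $L$, by transitivity of integral closure).

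For part~\eqref{glob}, combining (i) and (ii) gives, for every $\gamma\in L^\times$,
\[
N_{L/\bbF_q(t)}(\gamma)=N_{K/\bbF_q(t)}\bigl(N_{L/K}(\gamma)\bigr)=N_{L/K}(\gamma)^{p^i}=\phi\bigl(N_{L/K}(\gamma)\bigr),
\]
so $N_{L/K}(\gamma)=\phi^{-1}\bigl(N_{L/\bbF_q(t)}(\gamma)\bigr)$. As $\phi^{-1}$ is a bijection with $\phi^{-1}(\alpha)=\alpha^{p^{-i}}$, it is immediate that $\alpha\in N_{L/\bbF_q(t)}L^\times$ if and only if $\alpha^{p^{-i}}\in N_{L/K}L^\times$.

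For parts~\eqref{ideal} and~\eqref{loc} I would run the same argument through the tower and then identify the middle term. By (i), $\alpha$ is the $L/\bbF_q(t)$-norm of a fractional ideal of $\cO_L$ (respectively an idele of $L$) if and only if $\alpha=N_{K/\bbF_q(t)}(b)$ for some $b$ lying in the image of $N_{L/K}$. Two things then close the argument: $N_{K/\bbF_q(t)}$ is injective on $I_K$, because $K/\bbF_q(t)$ is purely inseparable and hence each prime of $\bbF_q[t]$ has exactly one prime of $\cO_K$ above it, so $N_{K/\bbF_q(t)}$ carries the prime-ideal $\bbZ$-basis of $I_K$ to divisors supported at pairwise distinct primes of $\bbF_q[t]$, which are linearly independent; and $N_{K/\bbF_q(t)}\bigl((\alpha^{p^{-i}})\cO_K\bigr)=\bigl(N_{K/\bbF_q(t)}(\alpha^{p^{-i}})\bigr)=\bigl((\alpha^{p^{-i}})^{p^i}\bigr)=(\alpha)$ by (i)--(ii) together with the fact that the norm of a principal ideal is generated by the norm of a generator. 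Hence the unique fractional ideal of $\cO_K$ with $N_{K/\bbF_q(t)}$-image $(\alpha)$ is $(\alpha^{p^{-i}})\cO_K$, which forces $b=(\alpha^{p^{-i}})\cO_K$ and gives part~\eqref{ideal}. The idelic version is identical once one knows that $N_{K/\bbF_q(t)}$ is injective on $\bbA_K^\times$ and that it restricts on the diagonally embedded $K^\times$ to the field norm $\beta\mapsto\beta^{p^i}$ (so it again carries $\alpha^{p^{-i}}$ to $\alpha$).

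The step that needs genuine care is the injectivity of $N_{K/\bbF_q(t)}$ on $\bbA_K^\times$, equivalently the claim that $K\otimes_{\bbF_q(t)}\bbF_q(t)_v$ is a \emph{field} for every place $v$ of $\bbF_q(t)$ --- if it carried nilpotents, the local norm could kill units. Writing $K=\bbF_q(t)[X]/(X^{p^i}-t)$, this algebra is $\bbF_q(t)_v[X]/(X^{p^i}-t)$, which is a field precisely when $X^{p^i}-t$ is irreducible over $\bbF_q(t)_v$ (a reducible $X^{p^i}-t$ is a $p$-th power and produces nilpotents), equivalently precisely when $t\notin\bbF_q(t)_v^{p}$. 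I would prove this last point uniformly in $v$: the set $\bbF_q(t)_v^{p}$ is a subfield of $\bbF_q(t)_v$ containing $\bbF_q^p=\bbF_q$ and $t^p$, so were it to contain $t$ it would contain all of $\bbF_q(t)$ and hence a uniformizer for $v$ --- namely the monic irreducible $P$ defining $v$ when $v$ is a finite place, or $1/t$ when $v$ is the infinite place --- contradicting that a uniformizer has valuation $1$, which is not divisible by $p$. Consequently $K\otimes_{\bbF_q(t)}\bbF_q(t)_v$ is the completion $K_w$ at the unique place $w\mid v$, purely inseparable of degree $p^i$ over $\bbF_q(t)_v$; thus $N_{K_w/\bbF_q(t)_v}$ is the injective $p^i$-power map, $N_{K/\bbF_q(t)}=\prod_v N_{K_w/\bbF_q(t)_v}$ is injective on $\bbA_K^\times$, and part~\eqref{loc} follows.
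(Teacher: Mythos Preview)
Your proof is correct and follows essentially the same route as the paper: both arguments use the tower $\bbF_q(t)\subset K\subset L$, transitivity of the norm, and the fact that the norm for a purely inseparable extension of degree $p^i$ is the $p^i$-th power map. The paper cites Cohen--Odoni for parts~\eqref{ideal} and~\eqref{glob} and handles part~\eqref{loc} placewise (one direction by applying $N_{K_\frq/(\bbF_q(t))_\frp}$, the other by taking $p^i$-th roots); your version supplies self-contained proofs of \eqref{ideal} and \eqref{glob} and packages \eqref{loc} via injectivity of $N_{K/\bbF_q(t)}$ on $\bbA_K^\times$, which amounts to the same computation.

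One point worth noting: your careful verification that $K\otimes_{\bbF_q(t)}\bbF_q(t)_v$ is a field (equivalently $[K_\frq:(\bbF_q(t))_\frp]=p^i$) is exactly what is needed to justify the paper's assertion that $N_{K_\frq/(\bbF_q(t))_\frp}(x)=x^{p^i}$, which the paper uses without further comment. So your argument is somewhat more explicit here, but the strategies coincide.
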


	\begin{proof} Parts \eqref{ideal} and \eqref{glob} are the content of \cite[Lemma 1.2]{cohen_odoni}. We prove \eqref{loc}. First suppose that $\alpha^{p^{-i}}\in N_{L/K}\bbA_{L}^{{\times}}$. This means that for every place $\frq$ of $K$ there exists $(\beta_{\frr})_\frr \in \prod_{\frr\mid \frq}L_{\frr}^{{\times}}$ such that 
	\begin{equation}\label{eq:locnorm}
	\alpha^{p^{-i}}= \prod\limits_{\frr \mid \frq}N_{L_{\frr}/K_{\frq}} (\beta_{\frr} ).
	\end{equation}
	Let $\frp$ be a place of $\bbF_q(t)$.
	 By \cite[Lemma~7.3]{rosen}, since $K/\bbF_q(t)$ is a purely inseparable extension, there is a unique place $\frq$ of $K$ above $\frp$. Taking $N_{K/\bbF_q(t)}$ of both sides of \eqref{eq:locnorm} gives
	  \begin{equation}\label{eq:locnorm2}
	N_{K/\bbF_q(t)}(\alpha^{p^{-i}})= \prod\limits_{\frr\mid \frq}N_{K_\frq/(\bbF_q(t))_\frp}( N_{L_{\frr}/K_{\frq}} (\beta_{\frr} ))= \prod\limits_{\frr\mid \frp}N_{L_{\frr}/(\bbF_q(t))_\frp}(\beta_{\frr} ).
	\end{equation}
	Now observe that $N_{K/\bbF_q(t)}(\alpha^{p^{-i}})=\alpha$, since $K/\bbF_q(t)$ is a purely inseparable extension of degree $p^{i}$. Hence \eqref{eq:locnorm2} becomes 
	  \begin{equation}\label{eq:locnorm3}
	  \alpha =\prod\limits_{\frr \mid \frp}N_{L_{\frr}/(\bbF_q(t))_\frp}(\beta_{\frr} ).
	    \end{equation}
Since $\frp$ was arbitrary, we have shown that $\alpha\in N_{L/\bbF_q(t)} \bbA_{L}^{{\times}} $, as required.	
	
	 Now suppose that $\alpha\in N_{L/\bbF_q(t)} \bbA_{L}^{{\times}} $, so for every place $\frp$ of $\bbF_q(t)$ there exists $(\beta_{\frr})_\frr \in \prod_{\frr \mid \frp}L_{\frr}^{{\times}}$ such that 
	\begin{equation}\label{eq:locnorm4}
	\alpha= \prod\limits_{\frr \mid \frp}N_{L_{\frr}/(\bbF_q(t))_\frp} (\beta_{\frr} ).
	\end{equation}
	 Again, for each place $\frp$ of $\bbF_q(t)$ there exists a unique place $\frq$ of $K$ above $\frp$. Furthermore, $N_{K_\frq/(\bbF_q(t))_\frp}(x)=x^{p^i}$ for all $x\in K_\frq$.
	  Thus, \eqref{eq:locnorm4} becomes
	 \begin{equation}\label{eq:locnorm5}
	\alpha= \prod\limits_{\frr \mid \frq}( N_{L_{\frr}/K_{\frq}} (\beta_{\frr} ))^{p^i}.
	\end{equation}
	 Hence $\alpha^{p^{-i}}\in N_{L/K} \bbA_{L}^{{\times}} $, as required.
\end{proof}

Lemma~\ref{lem:sep2} shows that $\alpha\mapsto\alpha^{p^{-i}}$ gives bijections
\[\{\alpha\in  \bbF_q[t]\cap N_{L/\bbF_q(t)}L^\times\mid (\alpha,\frn)=1,  \deg \alpha=d\}\to \{\beta\in \cO_K \cap N_{L/K}L^\times\mid (\beta,\frn)=1,  \deg \beta=d\}\]
and
\[\{\alpha\in  \bbF_q[t]\cap N_{L/\bbF_q(t)}\bbA_L^\times\mid (\alpha,\frn)=1,  \deg \alpha=d\}\to \{\beta\in \cO_K \cap N_{L/K}\bbA_L^\times\mid (\beta,\frn)=1,  \deg \beta=d\}\]
where $\cO_K=\bbF_q[t^{p^{-i}}]$ and $\deg\beta$ is the degree with respect to the variable $t^{p^{-i}}$. Defining
\begin{eqnarray*}
N_{\textup{glob}}(L/K,\mathfrak{n}, d)&=&\#\{\beta\in \cO_K \cap N_{L/K}L^\times\mid (\beta,\mathfrak{n})=1,  \deg \beta=d\},\ \ \textrm{  and}\\
N_{\textup{loc}}(L/K,\mathfrak{n}, d)&=&\#\{\beta\in \cO_K \cap N_{L/K}\bbA_L^\times\mid (\beta,\mathfrak{n})=1,  \deg \beta=d\}
\end{eqnarray*}
 gives
\begin{eqnarray}
N_{\textup{glob}}(L/\bbF_q(t),\frn, d)&=& N_{\textup{glob}}(L/K,\frn, d),\ \ \textrm{  and} \label{eq:N(L/K)glob}\\
N_{\textup{loc}}(L/\bbF_q(t),\frn, d)&=&N_{\textup{loc}}(L/K,\frn, d).\label{eq:N(L/K)loc}
\end{eqnarray}
This allows us to restrict to the finite separable extension $L/K$ in order to prove Theorems~\ref{thm:main} and \ref{IIB_el}. We now list two further consequences of Lemma~\ref{lem:sep2} that will be used in the proofs of our main results.

\begin{corollary}\label{cor:kappas}
In the setting of Lemma~\ref{lem:sep2}, we have
\[\mathbb{F}_q^{\times}\cap N_{L/\bbF_q(t)}\mathbb{A}_L^{\times}=\mathbb{F}_q^{\times}\cap N_{L/K}\mathbb{A}_L^{\times}\]
and
\[\mathbb{F}_q^{\times}\cap N_{L/\bbF_q(t)}L^{\times}=\mathbb{F}_q^{\times}\cap N_{L/K}L^{\times}.\]
\end{corollary}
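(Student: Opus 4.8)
The plan is to deduce Corollary~\ref{cor:kappas} directly from Lemma~\ref{lem:sep2} by observing that the relevant subsets of $\mathbb{F}_q^\times$ are \emph{stable} under the bijection $\alpha\mapsto\alpha^{p^{-i}}$. The crucial point is that raising to the power $p^{-i}$ permutes $\mathbb{F}_q^\times$: since $\gcd(p^i,q-1)=1$ (as $q$ is a power of $p$), the Frobenius-type map $x\mapsto x^{p^i}$ is an automorphism of the finite cyclic group $\mathbb{F}_q^\times$, and so is its inverse $x\mapsto x^{p^{-i}}$. In particular, for $\alpha\in\mathbb{F}_q^\times$ the element $\alpha^{p^{-i}}$ again lies in $\mathbb{F}_q^\times$, and every element of $\mathbb{F}_q^\times$ arises this way.

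With this in hand, the argument is a one-line set-theoretic manipulation. For the everywhere-local statement: if $\alpha\in\mathbb{F}_q^\times\cap N_{L/\bbF_q(t)}\mathbb{A}_L^\times$, then by Lemma~\ref{lem:sep2}\eqref{loc} we have $\alpha^{p^{-i}}\in N_{L/K}\mathbb{A}_L^\times$, and $\alpha^{p^{-i}}\in\mathbb{F}_q^\times$ by the previous paragraph; conversely, writing an arbitrary element of $\mathbb{F}_q^\times\cap N_{L/K}\mathbb{A}_L^\times$ as $\alpha^{p^{-i}}$ for a unique $\alpha\in\mathbb{F}_q^\times$, Lemma~\ref{lem:sep2}\eqref{loc} gives $\alpha\in N_{L/\bbF_q(t)}\mathbb{A}_L^\times$. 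The same argument with Lemma~\ref{lem:sep2}\eqref{glob} in place of \eqref{loc} handles the global statement.

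I do not expect any real obstacle here: the only thing to be careful about is the bookkeeping that $\alpha\mapsto\alpha^{p^{-i}}$ genuinely restricts to a bijection of $\mathbb{F}_q^\times$ with itself (as opposed to merely landing in some larger perfect closure), which is exactly where the coprimality of $p^i$ and $q-1$ is used. Everything else is a formal consequence of the equivalences already established in Lemma~\ref{lem:sep2}.

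\begin{proof}
Since $q$ is a power of $p$, we have $\gcd(p^i,q-1)=1$, so the map $x\mapsto x^{p^i}$ is an automorphism of the cyclic group $\mathbb{F}_q^\times$; hence $x\mapsto x^{p^{-i}}$ also restricts to a bijection of $\mathbb{F}_q^\times$ with itself. In particular, for $\alpha\in\mathbb{F}_q^\times$ we have $\alpha^{p^{-i}}\in\mathbb{F}_q^\times$, and every element of $\mathbb{F}_q^\times$ is of the form $\alpha^{p^{-i}}$ for a unique $\alpha\in\mathbb{F}_q^\times$.

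Let $\alpha\in\mathbb{F}_q^\times$. By Lemma~\ref{lem:sep2}\eqref{loc}, $\alpha\in N_{L/\bbF_q(t)}\mathbb{A}_L^\times$ if and only if $\alpha^{p^{-i}}\in N_{L/K}\mathbb{A}_L^\times$. Combined with the bijection above, this shows that $\alpha\mapsto\alpha^{p^{-i}}$ maps $\mathbb{F}_q^\times\cap N_{L/\bbF_q(t)}\mathbb{A}_L^\times$ bijectively onto $\mathbb{F}_q^\times\cap N_{L/K}\mathbb{A}_L^\times$; since these are both subsets of $\mathbb{F}_q^\times$, to prove they coincide it suffices to note that both are equal to $\{\alpha\in\mathbb{F}_q^\times\mid \alpha^{p^{-i}}\in N_{L/K}\mathbb{A}_L^\times\}$. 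Indeed, the second set is visibly this set, and the first is too by Lemma~\ref{lem:sep2}\eqref{loc}. This proves the first equality. The second equality follows in exactly the same way, using Lemma~\ref{lem:sep2}\eqref{glob} in place of Lemma~\ref{lem:sep2}\eqref{loc}.
\end{proof}
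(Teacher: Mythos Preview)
Your proof is correct and follows essentially the same approach as the paper's one-line argument (``This follows from Lemma~\ref{lem:sep2}, since $\alpha\mapsto\alpha^{p^{-i}}$ is an automorphism of $\mathbb{F}_q^\times$''), just with more detail spelled out. One small remark: the step where you say $\mathbb{F}_q^\times\cap N_{L/K}\mathbb{A}_L^\times$ is ``visibly'' equal to $\{\alpha\in\mathbb{F}_q^\times\mid \alpha^{p^{-i}}\in N_{L/K}\mathbb{A}_L^\times\}$ is not quite immediate---it uses that $N_{L/K}\mathbb{A}_L^\times$ is a group and that $\alpha$ and $\alpha^{p^{-i}}$ are integer powers of each other in $\mathbb{F}_q^\times$ (equivalently, that every subgroup of the cyclic group $\mathbb{F}_q^\times$ is stable under the automorphism $x\mapsto x^{p^{-i}}$)---but this is an easy observation and the paper leaves it implicit as well.
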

		
\begin{proof}
This follows from Lemma~\ref{lem:sep2}, since $\alpha\mapsto \alpha^{p^{-i}}$ is an automorphism of $\bbF_q^\times$.
\end{proof}		

\begin{corollary}\label{cor:knot}
	In the setting of Lemma~\ref{lem:sep2}, the map $\alpha\mapsto\alpha^{p^{-i}}$ induces an isomorphism \[\kn(L/\bbF_q(t)) \xrightarrow{\sim} \kn(L/K).\]
\end{corollary}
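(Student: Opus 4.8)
The plan is to show that the map $\alpha \mapsto \alpha^{p^{-i}}$ on $\bbF_q(t)^\times$ descends to a well-defined homomorphism on knot groups and that it is bijective, using Lemma~\ref{lem:sep2} at each stage. First I would observe that raising to the $p^{-i}$ power (equivalently, applying the inverse of the $p^i$-power Frobenius) is an automorphism of $\bbF_q(t)^\times$, since $\bbF_q(t)$ has characteristic $p$ and $K = \bbF_q(t^{p^{-i}})$ contains $\bbF_q(t)$ with $[K:\bbF_q(t)] = p^i$ purely inseparable, so $x \mapsto x^{p^i}$ is a bijection $K^\times \to \bbF_q(t)^\times$ with inverse $\alpha \mapsto \alpha^{p^{-i}}$. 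Then parts \eqref{glob} and \eqref{loc} of Lemma~\ref{lem:sep2} say precisely that this automorphism restricts to bijections
\[
\bbF_q(t)^\times \cap N_{L/\bbF_q(t)} L^\times \xrightarrow{\sim} K^\times \cap N_{L/K} L^\times
\]
and
\[
\bbF_q(t)^\times \cap N_{L/\bbF_q(t)} \bbA_L^\times \xrightarrow{\sim} K^\times \cap N_{L/K} \bbA_L^\times.
\]

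Next I would check compatibility with the denominators. Since $N_{L/K} L^\times \subseteq N_{L/K}\bbA_L^\times$, the numerator group for $L/\bbF_q(t)$ maps into the numerator group for $L/K$ and the denominator into the denominator; the inclusion $N_{L/\bbF_q(t)} L^\times \subseteq \bbF_q(t)^\times \cap N_{L/\bbF_q(t)} \bbA_L^\times$ maps onto $N_{L/K} L^\times \subseteq K^\times \cap N_{L/K}\bbA_L^\times$ under the isomorphism of \eqref{glob}. Therefore $\alpha \mapsto \alpha^{p^{-i}}$ induces a homomorphism
\[
\kn(L/\bbF_q(t)) = \frac{\bbF_q(t)^\times \cap N_{L/\bbF_q(t)}\bbA_L^\times}{N_{L/\bbF_q(t)} L^\times} \longrightarrow \frac{K^\times \cap N_{L/K}\bbA_L^\times}{N_{L/K} L^\times} = \kn(L/K),
\]
which is surjective because the map on numerators is surjective, and injective because an element of the source mapping to the identity is represented by some $\alpha$ with $\alpha^{p^{-i}} \in N_{L/K} L^\times$, whence $\alpha \in N_{L/\bbF_q(t)} L^\times$ by \eqref{glob}, so $\alpha$ represents the identity in $\kn(L/\bbF_q(t))$.

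There is essentially no serious obstacle here: the corollary is a formal consequence of Lemma~\ref{lem:sep2} together with the observation that the Frobenius-inverse is a group automorphism. The only point requiring a moment's care is the bookkeeping that the two bijections of Lemma~\ref{lem:sep2}\eqref{glob}, \eqref{loc} are compatible with one another, i.e.\ that they fit into a commutative square of inclusions so that the induced map on quotients is well-defined and bijective --- but this is immediate since both bijections are restrictions of the single automorphism $\alpha \mapsto \alpha^{p^{-i}}$ of $\bbF_q(t)^\times$. One could also phrase the whole argument in one line by noting that the Frobenius-inverse carries the short exact sequence defining $\kn(L/\bbF_q(t))$ isomorphically onto that defining $\kn(L/K)$.
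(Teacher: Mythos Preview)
Your argument is correct and is precisely the paper's one-line proof (``This follows immediately from Lemma~\ref{lem:sep2}'') unpacked in full detail. One small terminological slip: the map $\alpha\mapsto\alpha^{p^{-i}}$ is an isomorphism $\bbF_q(t)^\times\to K^\times$, not an automorphism of $\bbF_q(t)^\times$; since you use it correctly throughout (as the inverse of $x\mapsto x^{p^i}:K^\times\to\bbF_q(t)^\times$), this does not affect the argument.
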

\begin{proof}
This follows immediately from Lemma~\ref{lem:sep2}.	
\end{proof}

\section{Proof of our main results}\label{sec:pf}

In order to prove Theorem~\ref{IIB_el}, we will adapt the strategy of Cohen and Odoni in \cite{cohen_odoni} to the case of everywhere local norms. 
Define indicator functions on $I_K$ as follows: 
\begin{eqnarray*}
\delta(\fra)&=&\begin{cases} 1& \textrm{if } \fra\in N_{L/K} I_L,\\
0 & \textrm{otherwise,}
\end{cases}
\\
\dl(\fra)&=&\begin{cases} 1& \textrm{if } \fra=(\beta) \textrm{ for some }\beta\in K^\times\cap N_{L/K}\bbA_L^\times,\\
0 & \textrm{otherwise},\end{cases}\\
\dg(\fra)&=&\begin{cases} 1& \textrm{if } \fra=(N_{L/k}(\alpha)) \textrm{ for some }\alpha\in L^\times,\\
0 & \textrm{otherwise,}\end{cases}.
\end{eqnarray*}

\begin{lemma}We have
\[N_{\textup{loc}}(L/\bbF_q(t),\frn, d)=\kl\sum_{\substack{\fra\subset\cO_K\\ (\fra,\frn)=1\\ \deg\fra=d}}\dl(\fra)\]
and
\[N_{\textup{glob}}(L/\bbF_q(t),\frn, d)=\kg\sum_{\substack{\fra\subset\cO_K\\ (\fra,\frn)=1\\ \deg\fra=d}}\dg(\fra).\]
\end{lemma}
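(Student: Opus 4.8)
The plan is to establish the two claimed formulas for $N_{\textup{loc}}$ and $N_{\textup{glob}}$ by first reducing, via the results of Section~\ref{Lemmas}, to the separable extension $L/K$, and then grouping the polynomials $\beta \in \cO_K$ being counted according to the principal fractional ideal $(\beta)$ they generate. By \eqref{eq:N(L/K)loc} we have $N_{\textup{loc}}(L/\bbF_q(t),\frn,d) = N_{\textup{loc}}(L/K,\frn,d) = \#\{\beta \in \cO_K \cap N_{L/K}\bbA_L^\times : (\beta,\frn)=1,\ \deg\beta = d\}$, and similarly for the global count. So it suffices to count these $\beta$. First I would fix an integral ideal $\fra \subset \cO_K$ with $(\fra,\frn)=1$ and $\deg \fra = d$, and ask how many $\beta$ in the counting set have $(\beta) = \fra$. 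Two such $\beta$ differ by a unit of $\cO_K$, i.e.\ by an element of $\bbF_q^\times$ (the units of $\bbF_q[t^{p^{-i}}]$ are exactly the nonzero constants), so the fibre over $\fra$ has size at most $\#\bbF_q^\times$.

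The key point is then to identify, within a given fibre $\{c\beta_0 : c \in \bbF_q^\times\}$ (assuming it is nonempty, i.e.\ assuming some generator $\beta_0$ of $\fra$ lies in $K^\times \cap N_{L/K}\bbA_L^\times$), exactly which constant multiples $c\beta_0$ still lie in $K^\times \cap N_{L/K}\bbA_L^\times$. Since $N_{L/K}\bbA_L^\times$ is a subgroup of $\bbA_L^\times$, the element $c\beta_0$ is an everywhere-local norm if and only if $c = (c\beta_0)\beta_0^{-1}$ is an everywhere-local norm, i.e.\ if and only if $c \in \bbF_q^\times \cap N_{L/K}\bbA_L^\times$. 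Thus the fibre over $\fra$ has size exactly $\#(\bbF_q^\times \cap N_{L/K}\bbA_L^\times)$ when $\dl(\fra) = 1$ — recalling that $\dl(\fra)=1$ precisely means $\fra$ is generated by \emph{some} $\beta \in K^\times \cap N_{L/K}\bbA_L^\times$ — and size $0$ when $\dl(\fra) = 0$. Summing over all admissible $\fra$ gives
\[
N_{\textup{loc}}(L/\bbF_q(t),\frn,d) = \#(\bbF_q^\times \cap N_{L/K}\bbA_L^\times) \sum_{\substack{\fra \subset \cO_K \\ (\fra,\frn)=1 \\ \deg\fra = d}} \dl(\fra).
\]
By Corollary~\ref{cor:kappas}, $\#(\bbF_q^\times \cap N_{L/K}\bbA_L^\times) = \#(\bbF_q^\times \cap N_{L/\bbF_q(t)}\bbA_L^\times) = \kl$ (using the definition \eqref{eq:kappas}), which yields the first formula. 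The global statement is identical word for word: replace $N_{L/K}\bbA_L^\times$ by $N_{L/K}L^\times$, note that $\dg(\fra) = 1$ iff $\fra = (N_{L/K}\alpha)$ for some $\alpha \in L^\times$ iff $\fra$ is generated by some element of $K^\times \cap N_{L/K}L^\times$, and invoke Corollary~\ref{cor:kappas} to replace $\#(\bbF_q^\times \cap N_{L/K}L^\times)$ by $\kg$.

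There is no serious obstacle here; the lemma is essentially a bookkeeping exercise. The one point requiring a little care is the coprimality condition: I should check that $(\beta,\frn)=1$ is genuinely a condition on the ideal $(\beta) = \fra$ and not on $\beta$ itself, so that it is compatible with the fibre-wise grouping — this is immediate since $c\beta$ and $\beta$ generate the same ideal for $c \in \bbF_q^\times$, so $(\beta,\frn)=1 \iff (\fra,\frn)=1$. I should also make explicit that $\dg(\fra) = 1$ as defined (there exists $\alpha \in L^\times$ with $\fra = (N_{L/k}\alpha)$, noting $k = \bbF_q(t)$ in that display but the relevant norm for counting in $\cO_K$ is $N_{L/K}$) is equivalent to $\fra$ being principal generated by a global norm from $L/K$ lying in $K^\times$; this follows because $N_{L/K}(\alpha) \in K^\times$ automatically and, conversely, any $\beta \in K^\times \cap N_{L/K}L^\times$ is of the form $N_{L/K}(\alpha)$. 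Here I am implicitly using that $\cO_K = \bbF_q[t^{p^{-i}}]$ is a PID (being a polynomial ring over a field), so that "$\fra$ is generated by some $\beta$" is automatic once we know $\fra$ is integral — the content is only in \emph{which} generator lies in the relevant norm group.
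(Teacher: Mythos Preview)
Your proposal is correct and is precisely the argument the paper has in mind; the paper's own proof is just the two-sentence summary ``the terms $\kl$ and $\kg$ are there to account for the difference between elements of $\cO_K$ and principal integral ideals of $\cO_K$; now the result follows from \eqref{eq:N(L/K)glob} and \eqref{eq:N(L/K)loc}.'' You have carefully unpacked the fibre-counting over principal ideals and the invocation of Corollary~\ref{cor:kappas} that this sentence leaves implicit, and your handling of the small ambiguity in the subscript of $N_{L/k}$ in the definition of $\dg$ is also correct.
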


\begin{proof}
The terms $\kl$ and $\kg$ are there to account for the difference between elements of $\cO_K$ and principal integral ideals of $\cO_K$. Now the result follows from \eqref{eq:N(L/K)glob} and \eqref{eq:N(L/K)loc}.
\end{proof}

The next step is to show that the ideal generated by an everywhere local norm from $L/K$ is the norm of a fractional ideal of $\cO_L$. This is the content of Corollary~\ref{cor:eln} below.

\begin{lemma}\label{lem:norm}
Let $\alpha \in K$. Then $(\alpha) \in N_{L/K}(I_{L})$ if and only if for every finite place $\mathfrak{p}$ the greatest common divisor of the residue degrees $f_{\mathfrak{q}/\mathfrak{p}}$ of the places $\mathfrak{q}$ above  $\mathfrak{p}$ divides $\ord_{\mathfrak{p}}(\alpha)$. 
\end{lemma}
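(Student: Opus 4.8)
\textbf{Proof plan for Lemma~\ref{lem:norm}.}

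The plan is to reduce everything to the level of local fields, where the norm map on fractional ideals is completely explicit. First I would observe that a fractional ideal $\fra$ of $\cO_K$ lies in $N_{L/K}(I_L)$ if and only if its valuation at every finite place $\frp$ of $K$ is attained by the norm of some fractional ideal of $\cO_L$ supported above $\frp$; this is because both sides of the correspondence between finite divisors and fractional ideals respect localisation, and the norm of an ideal is the product of the local contributions. So fix a finite place $\frp$ of $K$ and let $a = \ord_\frp(\alpha)$. The norm of a fractional ideal of $\cO_L$ supported above $\frp$ has $\frp$-valuation equal to $\sum_{\frq \mid \frp} f_{\frq/\frp} \, n_\frq$ as the $n_\frq$ range over $\bbZ$; hence the set of $\frp$-valuations achieved is exactly the subgroup $g_\frp \bbZ$ of $\bbZ$, where $g_\frp = \gcd_{\frq \mid \frp} f_{\frq/\frp}$. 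Therefore $(\alpha) \in N_{L/K}(I_L)$ if and only if $g_\frp \mid a = \ord_\frp(\alpha)$ for every finite place $\frp$, which is precisely the claimed criterion.

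The one point requiring a little care is the passage from ``the $\frp$-valuation of $\fra$ is a norm-value locally at $\frp$ for every $\frp$'' to ``$\fra$ globally is a norm of a fractional ideal of $\cO_L$''. For each $\frp$ with $\ord_\frp(\alpha) = a_\frp \ne 0$ (only finitely many) I would choose, using $g_\frp \mid a_\frp$, integers $n_\frq$ (for $\frq \mid \frp$) with $\sum_{\frq\mid\frp} f_{\frq/\frp} n_\frq = a_\frp$, and then assemble the fractional ideal $\frb = \prod_\frp \prod_{\frq \mid \frp} \frq^{n_\frq}$ of $\cO_L$; its norm is $\prod_\frp \frp^{a_\frp} = (\alpha)$ by multiplicativity of the ideal norm and the fact that $N_{L/K}(\frq) = \frp^{f_{\frq/\frp}}$ for a prime $\frq$ of $\cO_L$ above $\frp$. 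Conversely, if $(\alpha) = N_{L/K}(\frb)$ then comparing $\frp$-valuations gives $\ord_\frp(\alpha) = \sum_{\frq\mid\frp} f_{\frq/\frp}\ord_\frq(\frb) \in g_\frp\bbZ$ directly.

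The main obstacle, such as it is, is bookkeeping rather than genuine difficulty: one must be sure that ``fractional ideal of $\cO_L$'' (as opposed to integral ideal) gives enough freedom to realise negative valuations, and that the identification of $D_\infty(L)$ with $I_L$ from the discussion preceding Lemma~\ref{lem:sep2} lets us ignore the infinite places entirely when $\alpha \in K$ (equivalently $\alpha \in \bbF_q[t]$, so $(\alpha)$ is an honest ideal of $\cO_K$). Apart from that, the proof is just the elementary fact that the image of a $\bbZ$-linear map $\bbZ^r \to \bbZ$, $(n_\frq) \mapsto \sum f_{\frq/\frp} n_\frq$, is the subgroup generated by the gcd of the coefficients.
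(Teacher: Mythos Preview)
Your argument is correct and is precisely the elementary proof the paper has in mind: the paper does not give its own argument but simply cites \cite[Lemma~2.1]{browning_newton} and says ``the same proofs work'', and the content of that lemma is exactly the computation you carry out, namely that $N_{L/K}(\frq)=\frp^{f_{\frq/\frp}}$ together with multiplicativity forces the $\frp$-part of a norm to have valuation in $g_\frp\bbZ$, and B\'ezout gives the converse.

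One small slip in your final paragraph: $\alpha\in K$ is \emph{not} equivalent to $\alpha\in\bbF_q[t]$ (nor to $\alpha\in\cO_K$), so $(\alpha)$ is in general only a fractional ideal of $\cO_K$, not an integral one. This does not affect your proof at all, since you correctly allow fractional ideals and negative $n_\frq$ throughout; the point about ``ignoring the infinite places'' is simply that $I_K$ is \emph{defined} via finite divisors, so there is nothing to ignore.
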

\begin{corollary}\label{cor:eln}
	If $\alpha \in K^{{\times}} \cap N_{L/K} (\bbA_{L}^{{\times}} ) $ then  $(\alpha) \in N_{L/K}(I_{L})$.

\end{corollary}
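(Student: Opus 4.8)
The plan is to deduce Corollary~\ref{cor:eln} from Lemma~\ref{lem:norm} by a purely local check at each finite place $\frp$ of $K$. By Lemma~\ref{lem:norm}, it suffices to show that if $\alpha\in K^\times\cap N_{L/K}(\bbA_L^\times)$, then for every finite place $\frp$ the greatest common divisor $d_\frp:=\gcd\{f_{\frq/\frp}:\frq\mid\frp\}$ divides $\ord_\frp(\alpha)$. Fix such a place $\frp$. The hypothesis that $\alpha$ is an everywhere local norm gives in particular that $\alpha$ is a norm from the semilocal algebra at $\frp$, i.e. there exist $\beta_\frq\in L_\frq^\times$ with $\alpha=\prod_{\frq\mid\frp}N_{L_\frq/K_\frp}(\beta_\frq)$.

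First I would apply the valuation $\ord_\frp$ (normalised so that $\ord_\frp(K_\frp^\times)=\bbZ$) to this identity. Using the standard local formula $\ord_\frp(N_{L_\frq/K_\frp}(\beta_\frq))=f_{\frq/\frp}\cdot\ord_\frq(\beta_\frq)$, where $\ord_\frq$ is the normalised valuation on $L_\frq$, we obtain
\[\ord_\frp(\alpha)=\sum_{\frq\mid\frp}f_{\frq/\frp}\,\ord_\frq(\beta_\frq).\]
Since each $\ord_\frq(\beta_\frq)\in\bbZ$ and $d_\frp\mid f_{\frq/\frp}$ for every $\frq\mid\frp$, the right-hand side is divisible by $d_\frp$; hence $d_\frp\mid\ord_\frp(\alpha)$. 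As $\frp$ was arbitrary, Lemma~\ref{lem:norm} applies and yields $(\alpha)\in N_{L/K}(I_L)$.

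The only real point requiring care — and the step I expect to be the main (minor) obstacle — is justifying the local norm-valuation formula $\ord_\frp\circ N_{L_\frq/K_\frp}=f_{\frq/\frp}\cdot\ord_\frq$, i.e. that the image of $N_{L_\frq/K_\frp}$ meets $K_\frp^\times$ in elements whose valuation is a multiple of the residue degree. This is a well-known fact for finite extensions of local fields (the norm of a uniformiser of $L_\frq$ has $\frp$-valuation equal to $f_{\frq/\frp}$, and the norm of a unit is a unit), so I would simply cite it; one should double-check the conventions so that the residue degrees $f_{\frq/\frp}$ appearing here match those in the statement of Lemma~\ref{lem:norm}. Everything else is bookkeeping. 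Note also that the argument does not need the global norm hypothesis at infinite places, nor the multiplicativity across all places simultaneously — the place-by-place local condition is all that is used.
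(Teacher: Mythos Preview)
Your proof is correct and is exactly the argument the paper intends: the paper does not spell it out but simply notes that the same proof as \cite[Corollary~2.2]{browning_newton} works, and that proof is precisely the local valuation computation you carried out (apply $\ord_\frp$ to $\alpha=\prod_{\frq\mid\frp}N_{L_\frq/K_\frp}(\beta_\frq)$ and use $\ord_\frp\circ N_{L_\frq/K_\frp}=f_{\frq/\frp}\cdot\ord_\frq$ to feed into Lemma~\ref{lem:norm}).
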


\begin{proof}[Proof of Lemma \ref{lem:norm} and Corollary \ref{cor:eln}.] Lemma \ref{lem:norm} and Corollary \ref{cor:eln} are the global function field analogues of \cite[Lemma 2.1]{browning_newton} and \cite[Corollary 2.2]{browning_newton}. The same proofs work.
\end{proof}

Using Lemma~\ref{lem:sep2} to move between $L/\bbF_q(t)$ and $L/K$, Corollary~\ref{cor:eln} means that a first approximation for $N_{\textup{loc}}(L/\bbF_q(t),\frn, d)$ is given by
\begin{equation}\label{eq:idealnorm}
\sum_{\substack{\fra\subset\cO_K\\ (\fra,\frn)=1\\ \deg\fra=d}}\delta(\fra)
\end{equation}
which counts integral ideals of $\bbF_q[t]$, coprime to $\frn$ and of degree $d$, that are norms of fractional ideals of $\cO_L$. In \cite[Theorem~IIA]{cohen_odoni}, Cohen and Odoni give an asymptotic formula for \eqref{eq:idealnorm} by studying the Dirichlet series
\[f(\mathfrak{n},t)=\sum_{\substack{\mathfrak{a}\subset \mathcal{O}_K\\ (\mathfrak{a},\mathfrak{n})=1}}\delta(\mathfrak{a})t^{\deg(\mathfrak{a})},\ \ \ \ \ \ |t|<q^{-1}.\]
 They then go on to analyse the behaviour of the Dirichlet series
\[
f_{\textup{glob}}(\frn,t)=\sum_{\substack{\mathfrak{a}\subset \mathcal{O}_K\\ (\mathfrak{a},\frn)=1}}\dg(\mathfrak{a})t^{\deg(\mathfrak{a})},\ \ \ \ \ \ |t|<q^{-1},\]
by expressing $\dg$ in terms of $\delta$ and a sum over the characters of a certain finite abelian group coming from class field theory. With some work, this allows them to deduce an asymptotic formula for $N_{\textup{glob}}(L/\bbF_q(t),\frn, d)$ in \cite[Theorem~IIB]{cohen_odoni}. We seek to employ the same strategy to analyse the behaviour of the Dirichlet series 
\[f_{\textup{loc}}(\mathfrak{n},t)=\sum_{\substack{\mathfrak{a}\subset \mathcal{O}_K\\ (\mathfrak{a},\mathfrak{n})=1}}\dl(\mathfrak{a})t^{\deg(\mathfrak{a})},\ \ \ \ \ \ |t|<q^{-1},\]
and thereby prove Theorem~\ref{IIB_el}. This requires us to express $\dl$ in terms of $\delta$ and a sum over the characters of a finite abelian group. This is achieved in Lemma~\ref{lem:indicators} after some class field theoretic preliminaries.

\subsection{Class field theory}  \label{CFT}
We begin by recalling some essential facts.
Let $\mathfrak{m}$ be an effective divisor of a global function field $F$. Let $D_{\mathfrak m}(F)$ denote the group of divisors of $F$ with support disjoint from the support of $\mathfrak m$. Write $P_{\mathfrak m}(F)$ for the subgroup of $D_{\mathfrak m}(F)$ consisting of
principal divisors $\div(f)$ such that $f\in F^\times$ satisfies $\ord_\frp(f-1)\geq \ord_{\frakp}  \mathfrak{m}$ for all places $\frakp$ in the support of $\mathfrak{m}$. The ray class group of $F$ modulo $\mathfrak m$ is defined to be
\[\cl_{\mathfrak m}(F) = D_{\mathfrak m}(F)/P_{\mathfrak m}(F).\]
The group $\cl_{\mathfrak m}(F) $ is never finite. However, its degree zero part \[\cl_{\mathfrak m}^{0}(F)= \{[\mathfrak{d}] \in \cl_{\mathfrak m} (F) \mid \deg \mathfrak{d}=0\}\] is finite, see \cite[p.139]{rosen}, for example.

  Class field theory gives a one-to-one correspondence between  the subgroups of finite index of the ray class group $\cl_{\mathfrak{m}}(F)$ and the finite abelian extensions of $F$ that are unramified away from $\mathfrak m$. The correspondence is via the Artin map which gives a canonical isomorphism $A_{E/F}:\cl_{\mathfrak{m}}(F)/H\xrightarrow{\sim}  \Gal(E/F)$, where $E/F$ is the extension associated to the subgroup $H$.
In particular, the places that split completely in $E/F$ are precisely the
places in $H$. 

We expect that the following proposition is well known, but we give the proof here for completeness.
\begin{proposition}\label{prop:finite}
Let $F$ be a global function field, let $\mathfrak{m}$ be an effective divisor of $F$ and let $H$ be a subgroup of the ray class group $\cl_{\mathfrak{m}}(F)$. Then $H$ has finite index in $\cl_{\mathfrak{m}}(F)$ if and only if $H$ contains a divisor class of nonzero degree.
\end{proposition}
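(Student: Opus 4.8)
The plan is to prove both implications directly using the structure of $\cl_{\mathfrak m}(F)$ via the degree map. Recall the exact sequence
\[
0 \to \cl_{\mathfrak m}^0(F) \to \cl_{\mathfrak m}(F) \xrightarrow{\deg} \bbZ,
\]
where the image of $\deg$ is $\delta\bbZ$ for some positive integer $\delta$ (the gcd of degrees of all places of $F$; in fact $\delta=1$ by F.K.\ Schmidt, but we will not need that), and $\cl_{\mathfrak m}^0(F)$ is finite by \cite[p.139]{rosen}.

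For the forward direction, suppose $H$ has finite index $n$ in $\cl_{\mathfrak m}(F)$. Pick any divisor class $[\mathfrak d]$ of nonzero degree $\delta$ (such a class exists since the degree map is nontrivial). Then $n[\mathfrak d]\in H$, and $\deg(n[\mathfrak d])=n\delta\neq 0$, so $H$ contains a class of nonzero degree.

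For the converse, suppose $H$ contains a class $[\mathfrak d]$ with $\deg[\mathfrak d]=m\neq 0$. I would argue that the subgroup $H$ already has finite index just in $\ker(\deg)=\cl_{\mathfrak m}^0(F)$ plus $[\mathfrak d]$: more precisely, consider $H':=H\cap\cl_{\mathfrak m}^0(F)$. Then $H'$ has finite index in $\cl_{\mathfrak m}^0(F)$ since the latter is finite, and the quotient $\cl_{\mathfrak m}(F)/(H'+\langle[\mathfrak d]\rangle)$ fits into an exact sequence whose degree-zero part is $\cl_{\mathfrak m}^0(F)/H'$ (finite) and whose image under $\deg$ is $\delta\bbZ/m\delta'\bbZ$ (finite), where $m=\delta'\delta$... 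Let me organize this more cleanly: the composite $\langle[\mathfrak d]\rangle\hookrightarrow\cl_{\mathfrak m}(F)\xrightarrow{\deg}\bbZ$ has image $m\bbZ$ of finite index in $\delta\bbZ$, so $\langle[\mathfrak d]\rangle$ together with all of $\cl_{\mathfrak m}^0(F)$ generates a finite-index subgroup of $\cl_{\mathfrak m}(F)$; and since $[\mathfrak d]\in H$ and $\cl_{\mathfrak m}^0(F)$ is finite, $H$ contains a finite-index subgroup of that finite-index subgroup, hence $H$ itself has finite index in $\cl_{\mathfrak m}(F)$.

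The only step requiring genuine care is the converse: one must correctly track the interplay between the (infinite) rank-one free part detected by $\deg$ and the finite torsion part $\cl_{\mathfrak m}^0(F)$, and verify that possessing a single nonzero-degree class — combined with finiteness of $\cl_{\mathfrak m}^0(F)$ — forces finite index. The cleanest way to finish is: the subgroup $G:=\langle \cl_{\mathfrak m}^0(F),\,[\mathfrak d]\rangle$ has finite index in $\cl_{\mathfrak m}(F)$ because $\cl_{\mathfrak m}(F)/G$ is a quotient of $\bbZ/m\bbZ$ via $\deg$; and $[G:G\cap H]\leq[G:\cl_{\mathfrak m}^0(F)\cap H]\leq|\cl_{\mathfrak m}^0(F)|<\infty$ using $[\mathfrak d]\in H$. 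Combining, $[\cl_{\mathfrak m}(F):H]\leq[\cl_{\mathfrak m}(F):G]\cdot[G:G\cap H]<\infty$. I would present the forward direction in two lines and devote the bulk of the write-up to making this chain of index inequalities precise.
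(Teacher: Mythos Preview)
Your overall strategy is sound and close in spirit to the paper's: both arguments pivot on the degree exact sequence $0\to\cl_{\mathfrak m}^0(F)\to\cl_{\mathfrak m}(F)\xrightarrow{\deg}\bbZ$ together with the finiteness of $\cl_{\mathfrak m}^0(F)$. Your forward direction is fine as written.

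However, the chain of inequalities in your converse is broken. You write
\[
[G:G\cap H]\leq[G:\cl_{\mathfrak m}^0(F)\cap H]\leq|\cl_{\mathfrak m}^0(F)|,
\]
but the middle term is infinite: $G$ contains $\langle[\mathfrak d]\rangle\cong\bbZ$, while $\cl_{\mathfrak m}^0(F)\cap H$ is a finite group. What you actually need is the direct computation you were circling around earlier. Since $\deg(k[\mathfrak d])=km\neq 0$ for $k\neq 0$, one has $\langle[\mathfrak d]\rangle\cap\cl_{\mathfrak m}^0(F)=0$, so every element of $G$ is uniquely $c+k[\mathfrak d]$ with $c\in\cl_{\mathfrak m}^0(F)$; and because $k[\mathfrak d]\in H$ already, such an element lies in $H$ iff $c\in H$. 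Hence $G\cap H=(\cl_{\mathfrak m}^0(F)\cap H)+\langle[\mathfrak d]\rangle$ and
\[
G/(G\cap H)\cong \cl_{\mathfrak m}^0(F)/(\cl_{\mathfrak m}^0(F)\cap H),
\]
which is finite. With this correction your argument goes through.

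For comparison, the paper handles both directions simultaneously via the snake lemma: letting $n$ be the smallest non-negative degree of a class in $H$, the inclusion of short exact sequences (for $H$ and for $\cl_{\mathfrak m}(F)$) yields
\[
0\to\frac{\cl_{\mathfrak m}^0(F)}{\cl_{\mathfrak m}^0(F)\cap H}\to\frac{\cl_{\mathfrak m}(F)}{H}\to\bbZ/n\bbZ\to 0,
\]
so finiteness of $\cl_{\mathfrak m}(F)/H$ is equivalent to $n\neq 0$. This is slicker and treats the two implications uniformly, whereas your route is more elementary and avoids the snake lemma; once the index computation above is fixed, either presentation is perfectly acceptable.
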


\begin{proof}
Let $n$ be the smallest non-negative  degree of
a divisor class in $H$ and consider the following commutative diagram with exact rows:
\[
\xymatrix{0\ar[r]& \cl_{\mathfrak m}^0(F)\cap H\ar[r]\ar[d]& H\ar[r]^{\deg}\ar[d] & n\mathbb{Z}\ar[r]\ar[d] &0\\
0\ar[r]& \cl_{\mathfrak m}^0(F)\ar[r]& \cl_{\mathfrak m}(F)\ar[r]^{\deg} &\mathbb{Z}\ar[r] & 0.\\
}\]
The degree map in the bottom row is surjective since $\cl_{\mathfrak m}(F)$ surjects onto $\cl_{\mathfrak n}(F)$ for any $\mathfrak n \mid \mathfrak m$. In particular, $\cl_{\mathfrak m}(F)$ surjects onto the class group of $F$ \cite[Thm 1.7]{Milne} and it is well known that the degree map from the class group surjects onto $\mathbb Z$.
Now the snake lemma gives an exact sequence
\[0\to \frac{\cl_{\mathfrak m}^0(F)}{\cl_{\mathfrak m}^0(F)\cap H}\to \frac{\cl_{\mathfrak m}(F)}{H}\to \mathbb{Z}/n\mathbb{Z}\to 0.\]
Since $\cl_{\mathfrak m}^{0}(F)$ is finite, we deduce that $\cl_{\mathfrak m}(F)/H$ is finite if and only if $n\neq 0$.
\end{proof}

Now define two subgroups of $I_L$: 

\[\Hg=\{\mathfrak a \in I_L\mid N_{L/K} \mathfrak a =(N_{L/K}(\alpha)) \textrm{ for some }\alpha\in L^{\times}\}\]
and 
\[\Hl=\{\mathfrak a \in I_L\mid N_{L/K} \mathfrak a=(\beta) \textrm{ for some }\beta\in K^{\times}\cap N_{L/K}\mathbb{A}_L^{\times}\}.\]

In~\cite[\S3]{cohen_odoni}, Cohen and Odoni show that 
\[P_{\infty}(L)=\{(\beta)\in I_L \mid \beta\equiv 1\pmod{\frp} \ \ \forall \frp  \mid\infty\}\subset \Hg.\] 
They also show that $\Hg$ contains an ideal
of nonzero degree (see Lemma~\ref{lem:Hg contains deg h} for a proof that $\Hg$ contains an ideal
of degree $h$). Proposition~\ref{prop:finite} therefore shows that $\Hg$ defines a ray class field $L_{\textup{glob}}/L$ unramified outside the infinite places with $\Gal(L_{\textup{glob}}/L)=I_L/\Hg$.
Since $N_{L/K}L^\times\subset K^{\times}\cap N_{L/K}\mathbb{A}_L^{\times}$ we have $\Hg\subset \Hl$. Therefore, 
$\Hl$ defines a ray class field $L_{\textup{loc}}\subset L_{\textup{glob}}$ unramified outside the infinite places with $\Gal(L_{\textup{loc}}/L)=I_L/\Hl$.

\begin{lemma}\label{lem:altgps}
The norm map $N_{L/K}$ gives isomorphisms
\[I_L/\Hg \xrightarrow{\sim} \frac{N_{L/K}I_L}{\{(N_{L/K}(\alpha)) \mid \alpha\in L^{\times}\}}\] 
and
\[I_L/\Hl \xrightarrow{\sim} \frac{N_{L/K}I_L}{\{(\beta)\mid \beta\in K^{\times}\cap N_{L/K}\mathbb{A}_L^{\times}\}}.\]
We denote the quotient groups on the right-hand sides by $\Gg$ and $\Gl$, respectively.
\end{lemma}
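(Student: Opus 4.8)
The plan is to obtain both isomorphisms as instances of the first isomorphism theorem, applied to the homomorphism $N_{L/K}\colon I_L\to N_{L/K}I_L$ followed by an appropriate quotient map; the substance then reduces to checking that the groups on the right-hand side make sense and to identifying the relevant kernels.

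First I would verify that the denominators appearing on the right are subgroups of $N_{L/K}I_L$. For the global quotient, the key point is that $N_{L/K}((\alpha))=(N_{L/K}(\alpha))$ for every $\alpha\in L^{\times}$ --- the norm of a principal fractional ideal is generated by the norm of any generator, which one checks place by place --- so that $\{(N_{L/K}(\alpha))\mid\alpha\in L^{\times}\}$ is precisely the image under $N_{L/K}$ of the group of principal fractional ideals of $\cO_L$, hence a subgroup of $N_{L/K}I_L$. For the local quotient, $K^{\times}\cap N_{L/K}\bbA_L^{\times}$ is a subgroup of $K^{\times}$, so $\{(\beta)\mid\beta\in K^{\times}\cap N_{L/K}\bbA_L^{\times}\}$ is a subgroup of $I_K$, and its containment in $N_{L/K}I_L$ is exactly the content of Corollary~\ref{cor:eln}. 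With these checks in place, $\Gg$ and $\Gl$ are well-defined abelian groups.

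Next I would consider the composite homomorphisms
\[I_L\xrightarrow{N_{L/K}}N_{L/K}I_L\twoheadrightarrow\Gg\qquad\text{and}\qquad I_L\xrightarrow{N_{L/K}}N_{L/K}I_L\twoheadrightarrow\Gl.\]
Each is surjective, being a composite of surjections. By construction, $\mathfrak a\in I_L$ lies in the kernel of the first map precisely when $N_{L/K}\mathfrak a=(N_{L/K}(\alpha))$ for some $\alpha\in L^{\times}$, that is, precisely when $\mathfrak a\in\Hg$; similarly the kernel of the second map is $\Hl$. Applying the first isomorphism theorem to each composite then yields the two displayed isomorphisms, each induced by $N_{L/K}$.

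I do not expect any serious obstacle here: the argument is essentially formal. The only inputs that are not immediate are Corollary~\ref{cor:eln}, needed so that $\Gl$ really is a quotient of $N_{L/K}I_L$ and not merely of $I_K$, and the elementary identity $N_{L/K}((\alpha))=(N_{L/K}(\alpha))$, needed so that the global denominator also lands inside $N_{L/K}I_L$.
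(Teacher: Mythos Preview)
Your proof is correct and follows exactly the approach of the paper, which simply notes that Corollary~\ref{cor:eln} makes the second map well defined and declares the rest clear. You have just written out the details (the first isomorphism theorem and the identification of the kernels) that the paper leaves implicit.
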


\begin{proof}
By Corollary~\ref{cor:eln}, $\{(\beta)\mid \beta\in K^{\times}\cap N_{L/K}\mathbb{A}_L^{\times}\}\subset N_{L/K}I_L$ so the second map is well defined. The rest is clear.
\end{proof}

The next lemma is a direct consequence of orthogonality of characters, as in \cite[\S3]{cohen_odoni}.

\begin{lemma}\label{lem:indicators}
For all $\mathfrak{a}\in I_K$,

\begin{eqnarray*}\dg(\mathfrak{a})&=&\frac{\delta(\mathfrak{a})}{\#\Gg}\sum_{\chi\in (\Gg)^\vee}\chi(\mathfrak{a}),\ \ \textrm{and}\\
\dl(\mathfrak{a})&=&\frac{\delta(\mathfrak{a})}{\#\Gl}\sum_{\chi\in (\Gl)^\vee}\chi(\mathfrak{a})
\end{eqnarray*}
where $G^\vee$ denotes the group of characters of an abelian group $G$.
\end{lemma}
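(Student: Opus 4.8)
The plan is to apply orthogonality of characters in each of the finite abelian groups $\Gg$ and $\Gl$, after translating the indicator functions $\dg$ and $\dl$ into statements about these groups via Lemma~\ref{lem:altgps}. I would treat the two identities in parallel since the argument is identical; let me describe it for $\dl$.

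First I would observe that for $\mathfrak{a}\in I_K$, if $\delta(\mathfrak a)=0$ then $\mathfrak a$ is not the norm of any fractional ideal of $\cO_L$, so in particular it cannot be the principal ideal $(\beta)$ for any $\beta\in K^\times\cap N_{L/K}\bbA_L^\times$ (such $\beta$ has $(\beta)\in N_{L/K}I_L$ by Corollary~\ref{cor:eln}); hence $\dl(\mathfrak a)=0$ and both sides of the claimed formula vanish. So I may assume $\delta(\mathfrak a)=1$, i.e.\ $\mathfrak a=N_{L/K}\mathfrak b$ for some $\mathfrak b\in I_L$. Then $\mathfrak a$ determines a well-defined class $[\mathfrak a]$ in $\Gl=N_{L/K}I_L/\{(\beta)\mid \beta\in K^\times\cap N_{L/K}\bbA_L^\times\}$, and by definition $\dl(\mathfrak a)=1$ precisely when $[\mathfrak a]$ is trivial in $\Gl$, and $\dl(\mathfrak a)=0$ otherwise. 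This is exactly the indicator of the trivial class, so the standard orthogonality relation
\[\frac{1}{\#\Gl}\sum_{\chi\in(\Gl)^\vee}\chi([\mathfrak a])=\begin{cases}1&[\mathfrak a]=e,\\0&[\mathfrak a]\neq e,\end{cases}\]
gives $\dl(\mathfrak a)=\frac{1}{\#\Gl}\sum_{\chi}\chi([\mathfrak a])$. Finally I would note that characters of $\Gl$ pull back to functions on $I_K$ supported on $N_{L/K}I_L$ (equivalently, one extends $\chi$ by zero off $N_{L/K}I_L$), and under this convention $\chi([\mathfrak a])=\chi(\mathfrak a)$ exactly when $\delta(\mathfrak a)=1$; multiplying through by $\delta(\mathfrak a)$ absorbs the two cases into the single clean formula
\[\dl(\mathfrak a)=\frac{\delta(\mathfrak a)}{\#\Gl}\sum_{\chi\in(\Gl)^\vee}\chi(\mathfrak a).\]
The identity for $\dg$ follows verbatim with $\Gl$ replaced by $\Gg$ and $\{(\beta)\mid\beta\in K^\times\cap N_{L/K}\bbA_L^\times\}$ replaced by $\{(N_{L/K}(\alpha))\mid\alpha\in L^\times\}$.

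There is no real obstacle here; the only point requiring a little care is the bookkeeping about how a character $\chi\in(\Gl)^\vee$ is regarded as a function on all of $I_K$ — one must fix the convention that $\chi(\mathfrak a)=0$ when $\delta(\mathfrak a)=0$, so that the factor $\delta(\mathfrak a)$ in front of the sum is consistent with the values of $\chi(\mathfrak a)$ inside it. Once that convention is in place, both formulas are immediate consequences of orthogonality of characters on the finite abelian groups $\Gg$ and $\Gl$ furnished by Lemma~\ref{lem:altgps}, exactly as in \cite[\S3]{cohen_odoni}.
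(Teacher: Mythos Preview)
Your argument is correct and is precisely the approach the paper indicates: the lemma is stated as ``a direct consequence of orthogonality of characters, as in \cite[\S3]{cohen_odoni},'' and your case split on $\delta(\mathfrak a)$ together with orthogonality in $\Gg$ and $\Gl$ supplies exactly those details. The only cosmetic point is that once the factor $\delta(\mathfrak a)$ is in front, the convention for $\chi(\mathfrak a)$ when $\delta(\mathfrak a)=0$ is immaterial, so you need not labour it.
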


Lemma~\ref{lem:indicators} has the following immediate consequence:

\begin{corollary}\label{cor:fs}
 For $|t|<q^{-1}$, 
\begin{eqnarray*}
f_{\textup{glob}}(\mathfrak{n},t)&=&\frac{1}{\#\Gg}\sum_{\chi\in (\Gg)^\vee}f(\mathfrak{n}, t,\chi),\ \ \textrm{and}\\
f_{\textup{loc}}(\mathfrak{n},t)&=&\frac{1}{\#\Gl}\sum_{\chi\in (\Gl)^\vee}f( \mathfrak{n}, t,\chi),\\
\textrm{where}\  f(\mathfrak{n}, t, \chi)&=&\sum_{\substack{\mathfrak{a}\subset \mathcal{O}_K\\ (\mathfrak{a},\mathfrak{n})=1}}\delta(\mathfrak{a})\chi(\mathfrak{a})t^{\deg(\mathfrak{a})}.
\end{eqnarray*}
\end{corollary}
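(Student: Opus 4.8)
\textbf{Proof proposal for Corollary~\ref{cor:fs}.} The plan is to substitute the pointwise identities of Lemma~\ref{lem:indicators} into the defining series for $f_{\textup{glob}}(\mathfrak{n},t)$ and $f_{\textup{loc}}(\mathfrak{n},t)$ and interchange the order of summation. Concretely, starting from
\[
f_{\textup{loc}}(\mathfrak{n},t)=\sum_{\substack{\mathfrak{a}\subset \mathcal{O}_K\\ (\mathfrak{a},\mathfrak{n})=1}}\dl(\mathfrak{a})\,t^{\deg(\mathfrak{a})},
\]
I would replace $\dl(\mathfrak{a})$ by $\frac{\delta(\mathfrak{a})}{\#\Gl}\sum_{\chi\in(\Gl)^\vee}\chi(\mathfrak{a})$. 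Since $(\Gl)^\vee$ is a \emph{finite} set (because $\Gl$ is finite, $L_{\textup{loc}}/L$ being a finite extension), the double sum may be reordered provided the resulting series converges absolutely for $|t|<q^{-1}$; then pulling the finite sum over $\chi$ to the outside and recognising the inner sum as $f(\mathfrak{n},t,\chi)$ gives the claimed formula, and the argument for $f_{\textup{glob}}$ is identical with $\Gg$ in place of $\Gl$. One small point worth noting explicitly is that the product $\delta(\mathfrak{a})\chi(\mathfrak{a})$ appearing in $f(\mathfrak{n},t,\chi)$ is to be read as $0$ whenever $\delta(\mathfrak{a})=0$; when $\delta(\mathfrak{a})=1$ we have $\mathfrak{a}\in N_{L/K}I_L$, so its class in $\Gl$ (resp.\ $\Gg$) is defined by Lemma~\ref{lem:altgps} and $\chi(\mathfrak{a})$ makes sense.

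The only genuine verification is the absolute convergence justifying the interchange. Since $\mathcal{O}_K=\bbF_q[t^{p^{-i}}]$ is a polynomial ring over $\bbF_q$, the number of integral ideals of $\mathcal{O}_K$ of degree $d$ equals the number of monic polynomials of degree $d$, namely $q^d$; as $|\delta(\mathfrak{a})|\leq 1$ and $|\chi(\mathfrak{a})|\leq 1$, the series $\sum_{\mathfrak{a}}|\delta(\mathfrak{a})\chi(\mathfrak{a})|\,|t|^{\deg\mathfrak{a}}$ is dominated by $\sum_{d\geq 0}q^d|t|^d$, which converges for $|t|<q^{-1}$. Hence each $f(\mathfrak{n},t,\chi)$ is absolutely convergent in that range and Fubini applies to the finite-by-infinite double sum, completing the proof. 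There is no real obstacle here: the corollary is a formal consequence of Lemma~\ref{lem:indicators} together with this elementary bound on the ideal-counting function.
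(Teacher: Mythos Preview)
Your proposal is correct and follows exactly the approach the paper takes: the paper simply states that Corollary~\ref{cor:fs} is an immediate consequence of Lemma~\ref{lem:indicators}, and your argument spells out precisely that immediate consequence (substitute the character-sum identities termwise and pull the finite sum over $\chi$ outside), together with the elementary absolute-convergence check that justifies the interchange. There is nothing to add.
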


Let $\Fg$ and $\Fl$ denote the degrees of the constant field extensions in $L_{\textup{glob}}/L$ and $L_{\textup{loc}}/L$, respectively.
Now \cite[Theorem~IIB]{cohen_odoni} shows that if $d$ is a large multiple of $f\Fg$, then $N_{\textup{glob}}(L/\bbF_q(t),\frn, d)$ is asymptotically
\begin{equation}\label{eq:glob}
\Fg \kg C \frac{q^d d^{B-1}}{[L_{\textup{glob}}:L]} \lambda_{\mathfrak{n}}^{-1} \{1+ O\bigl(d^{-A'} \omega^4(\mathfrak{n})\bigr)\} + 
O\Bigl(q^{d/2} e^{2 \sqrt{d \omega(\mathfrak{n})}}\Bigr)
\end{equation}
where $B$ and $C$ are as in Theorem~\ref{IIB_el} and $A'$ is a positive constant depending only on $L/\bbF_q(t)$. This result is proved using the expression for $f_{\textup{glob}}(\mathfrak{n},t)$ given in Corollary~\ref{cor:fs}. (To be completely accurate, we note that Cohen and Odoni give a superficially different expression for $f_{\textup{glob}}(\mathfrak{n},t)$ in \cite[(3.1)]{cohen_odoni}, owing to their use of $I_L/\Hg$ in place of the isomorphic group $\Gg$.) Employing the exact analogue of the proof of \cite[Theorem~IIB]{cohen_odoni} with $f_{\textup{loc}}(\mathfrak{n},t)$ in place of $f_{\textup{glob}}(\mathfrak{n},t)$ shows that
if $d$ is a large multiple of $f\Fl$, then $N_{\textup{loc}}(L/\bbF_q(t),\frn, d)$ is asymptotically
\begin{equation}\label{eq:IIB_el}
\Fl \kl C \frac{q^d d^{B-1}}{[L_{\textup{loc}}:L]} \lambda_{\mathfrak{n}}^{-1} \{1+ O\bigl(d^{-A} \omega^4(\mathfrak{n})\bigr)\} + 
O\Bigl(q^{d/2} e^{2 \sqrt{d \omega(\mathfrak{n})}}\Bigr)
\end{equation}
where $A,B$ and $C$ are as in Theorem~\ref{IIB_el}. Therefore, to complete the proof of Theorem~\ref{IIB_el}, it remains to show that $\Fl=h$, where $h$ is as defined in \eqref{eq:h}. In fact, we go further and prove in Theorem~\ref{thm:constants} that $\Fl=\Fg=h$.

\subsection{Constant fields}\label{sec:constant}

Recall from \eqref{eq:h} that
\[h=\gcd\{\deg \frp \mid \frp \textrm{ infinite place of } L \}.\]
Our main aim in this subsection is to complete the proof of Theorem~\ref{IIB_el} by proving the following result:
\begin{theorem}\label{thm:constants}
The full constant fields of $L_{\textup{glob}}$ and $L_{\textup{loc}}$ are both equal to $\bbF_{q^{fh}}$.
\end{theorem}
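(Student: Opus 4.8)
The plan is to apply Theorem~\ref{thm:hes} to the field $F = L$ (with full constant field $\bbF_{q^f}$, so playing the role of $\bbF_q$ in the statement of Theorem~\ref{thm:hes}), the effective divisor $\mathfrak m = \sum_{\frp\mid\infty}\frp$, and the finite index subgroups $\Hl$ and $\Hg$ of $\cl_{\mathfrak m}(L) = I_L/P_\infty(L)$ that cut out $L_{\textup{loc}}$ and $L_{\textup{glob}}$ respectively. Concretely, $L_{\textup{glob}}$ is the ray class field of $L$ modulo $\mathfrak m$ corresponding to the image $\overline{\Hg}$ of $\Hg$ in $\cl_{\mathfrak m}(L)$ (recall $P_\infty(L)\subset\Hg$, so this image is just $\Hg/P_\infty(L)$), and similarly for $L_{\textup{loc}}$ and $\overline{\Hl}$. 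By Theorem~\ref{thm:hes}, the full constant field of $L_{\textup{glob}}$ is $\bbF_{(q^f)^{r_{\textup{glob}}}} = \bbF_{q^{f\Fg}}$, where $r_{\textup{glob}} = \Fg$ is the smallest positive degree of a divisor class lying in $\overline{\Hg}$; likewise the full constant field of $L_{\textup{loc}}$ is $\bbF_{q^{f\Fl}}$ with $\Fl$ the smallest positive degree of a divisor class in $\overline{\Hl}$. (Here I use that passing to the quotient by $P_\infty(L)$, which consists of degree-zero divisors, does not change which degrees occur.) So it suffices to show that the smallest positive degree appearing in each of $\Hg$ and $\Hl$ equals $h$.

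For $\Hg$: the paper already records (via the forthcoming Lemma~\ref{lem:Hg contains deg h}) that $\Hg$ contains an ideal of degree $h$, so the minimal positive degree in $\Hg$ divides $h$; I need the reverse, that every element of $\Hg$ has degree divisible by $h$. If $\mathfrak a\in\Hg$, then by definition $N_{L/K}\mathfrak a = (N_{L/K}(\alpha))$ for some $\alpha\in L^\times$, hence $N_{L/K}\mathfrak a$ is a principal fractional ideal of $\cO_K = \bbF_q[t]$; any such ideal has degree $0$ (with respect to $t$), because a principal fractional ideal $(g)$ of $\bbF_q[t]$ has degree $\deg g - \deg g = 0$ in the divisor sense used here — equivalently, the degree-zero condition is exactly the statement that a principal divisor has degree $0$. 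On the other hand $\deg_K(N_{L/K}\mathfrak a)$ relates to $\deg_L(\mathfrak a)$ via the finite places; I want instead to read off the constraint at infinity. The cleanest route: the degree of a divisor on $L$ can be computed place by place, and for $\mathfrak a\in I_L$ (supported away from $\infty$) combined with the full divisor having degree governed by the product formula, the relevant fact is that $\deg\mathfrak a$ must be divisible by $h = \gcd\{\deg\frp : \frp\mid\infty\}$ whenever $N_{L/\bbF_q(t)}\mathfrak a$ is principal in $\bbF_q[t]$ — this is essentially Lemma~\ref{lem:norm} applied at the infinite place, or a direct degree count. I would carry this out explicitly: since $(N_{L/\bbF_q(t)}(\alpha))$ as a divisor of $\bbF_q(t)$ has degree $0$ and is supported at finite places of $\bbF_q(t)$ together with $\infty$ with multiplicity $-\ord_\infty(N_{L/\bbF_q(t)}(\alpha))$, and $\ord_\infty(N_{L/\bbF_q(t)}(\alpha)) = \sum_{\frp\mid\infty}f_{\frp/\infty}\ord_\frp(\alpha)$, one gets $\deg_{\bbF_q(t)}$ of the finite part $= -\ord_\infty(\cdots)$; unwinding the relation between $\div(\alpha)$ on $L$ and $\mathfrak a$ (namely $\mathfrak a$ is the finite part of $\div(\alpha)$ up to the ideal-theoretic normalisation), and using $\deg\div(\alpha) = 0$ on $L$ together with $\deg$ of the infinite part of $\div(\alpha)$ being $\sum_{\frp\mid\infty}\deg\frp\cdot\ord_\frp(\alpha)\in h\bbZ$, one concludes $\deg_L\mathfrak a\in h\bbZ$. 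Hence $\Fg = h$.

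For $\Hl$: since $\Hg\subset\Hl$ and $\Hg$ contains a degree-$h$ element, $\Hl$ also contains one, so $\Fl\mid h$. For the reverse I use the same principle: if $\mathfrak a\in\Hl$ then $N_{L/K}\mathfrak a = (\beta)$ with $\beta\in K^\times\cap N_{L/K}\bbA_L^\times$, which is again a principal fractional ideal of $\bbF_q[t]$, so exactly the same degree computation as above forces $\deg_L\mathfrak a\in h\bbZ$. Therefore $\Fl = h$ as well, and combining, the full constant fields of $L_{\textup{glob}}$ and $L_{\textup{loc}}$ are both $\bbF_{q^{fh}}$, completing the proof (and, as noted in the text, finishing the proof of Theorem~\ref{IIB_el}).

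\textbf{Main obstacle.} The essential content is the degree bookkeeping in the second paragraph: carefully relating the degree of $\mathfrak a\in I_L$ to the degree at the infinite places, via the identity $0 = \deg_L\div(\alpha) = \deg(\text{finite part}) + \sum_{\frp\mid\infty}\deg\frp\cdot\ord_\frp(\alpha)$, and observing that principality of the norm down to $\bbF_q[t]$ is what pins the finite-part degree to the (negative of the) infinite-part degree, which lies in $h\bbZ$. Everything else — the application of Theorem~\ref{thm:hes}, the reduction mod $P_\infty(L)$, the existence of a degree-$h$ element from Lemma~\ref{lem:Hg contains deg h}, and the containment $\Hg\subset\Hl$ — is formal. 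One should also double-check the normalisation convention for the degree of a fractional ideal of $\cO_L$ versus the associated divisor (the excerpt fixes this identification), so that "principal fractional ideal of $\bbF_q[t]$ has degree $0$" is stated with the right sign conventions; this is routine but worth stating cleanly.
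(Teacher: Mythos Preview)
Your overall strategy --- apply Theorem~\ref{thm:hes} and show that the smallest positive degree in each of $\Hg$ and $\Hl$ equals $h$ --- is exactly the paper's route (the paper packages this as Corollary~\ref{cor:h is gcd}, deduced from Lemma~\ref{lem:Hg contains deg h} and Lemma~\ref{lem:h divides deg}). However, the execution contains a genuine gap and a couple of false intermediate claims.

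First, two errors in the $\Hg$ paragraph. A principal fractional ideal $(g)$ of $\bbF_q[t]$ does \emph{not} have degree $0$: the degree of $(g)$ is the degree of the finite part of $\div(g)$, which for $g=f_1/f_2$ with $f_1,f_2\in\bbF_q[t]$ coprime is $\deg f_1-\deg f_2$. You seem to abandon this, but the confusion resurfaces. More seriously, from $N_{L/K}\mathfrak a=(N_{L/K}(\alpha))$ it does \emph{not} follow that $\mathfrak a$ is the finite part of $\div(\alpha)$: only the norms agree, and $N_{L/K}$ on $I_L$ is far from injective. What is true, and what you need, is the degree identity $\deg_K(N_{L/K}\mathfrak a)=f\cdot\deg_L\mathfrak a$ (as in the paper's \eqref{eq:deg}); combined with $\deg_K((N_{L/K}\alpha))=-\ord_\infty(N_{L/K}\alpha)=-\sum_{\frq\mid\infty}f_{\frq/\infty}\ord_\frq(\alpha)=-f\sum_{\frq\mid\infty}\deg_L\frq\cdot\ord_\frq(\alpha)$, this gives $\deg_L\mathfrak a\in h\bbZ$. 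So your conclusion for $\Hg$ is salvageable, but via the norm--degree relation, not via identifying $\mathfrak a$ with $(\alpha)$.

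The real gap is the $\Hl$ case. You write that ``exactly the same degree computation as above forces $\deg_L\mathfrak a\in h\bbZ$'', but your computation for $\Hg$ hinged on having a global $\alpha\in L^\times$ whose valuations at the infinite places of $L$ you could read off. For $\mathfrak a\in\Hl$ one only has $N_{L/K}\mathfrak a=(\beta)$ with $\beta\in K^\times$; knowing merely that $(\beta)$ is principal gives $f\deg_L\mathfrak a=-\ord_\infty(\beta)$, which is an arbitrary integer and says nothing about divisibility by $h$. The missing ingredient is precisely the hypothesis $\beta\in N_{L/K}\bbA_L^\times$: at the infinite place of $K$ this yields $\beta=\prod_{\frq\mid\infty}N_{L_\frq/K_\infty}(\gamma_\frq)$ for some $\gamma_\frq\in L_\frq^\times$, whence $\ord_\infty(\beta)=\sum_{\frq\mid\infty}f_{\frq/\infty}\ord_\frq(\gamma_\frq)$ and therefore $\deg_L\mathfrak a=-\sum_{\frq\mid\infty}\deg_L\frq\cdot\ord_\frq(\gamma_\frq)\in h\bbZ$. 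This local-norm step at infinity is the content of the paper's Lemma~\ref{lem:h divides deg}, and it is not captured by ``principality of the norm down to $\bbF_q[t]$'' alone, contrary to what your ``Main obstacle'' paragraph asserts.
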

The first step towards the proof of Theorem~\ref{thm:constants} is to prove Theorem~\ref{thm:hes}. This requires the following result of Hess and Massierer:

\begin{lemma}[{\cite[Lemma~3.2]{hess-massierer}}]\label{lem:HM}
Let $F$ be a global function field with full constant field $\bbF_q$ and let $F'/F$ be a constant field extension of finite degree. Then $\Gal(F'/F)$ is generated by the Frobenius automorphism $\varphi$ and the Artin map
\[A_{F'/F} : D(F) \to \Gal(F'/F)\]
    is given by
    \[A_{F'/F}(\mathfrak{d})=\varphi^{\deg \mathfrak{d}}.\]
The zero divisor of $F$ is a modulus of $F'/F$.
\end{lemma}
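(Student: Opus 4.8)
The plan is to reduce the statement to the elementary Galois theory of finite fields together with the standard behaviour of places in a constant field extension. Let $\bbF_{q^n}$ be the full constant field of $F'$. Since $F'/F$ is a constant field extension we have $F'=F\bbF_{q^n}$, and since $F$ has full constant field $\bbF_q$ the fields $F$ and $\bbF_{q^n}$ are linearly disjoint over $\bbF_q$. Restriction to $\bbF_{q^n}$ therefore gives an isomorphism $\Gal(F'/F)\xrightarrow{\sim}\Gal(\bbF_{q^n}/\bbF_q)$; the right-hand group is cyclic, generated by the $q$-power Frobenius, so, letting $\varphi$ denote the automorphism of $F'$ that fixes $F$ and acts as $x\mapsto x^q$ on $\bbF_{q^n}$, we obtain $\Gal(F'/F)=\langle\varphi\rangle$. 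This gives the first assertion.

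\textbf{Unramifiedness.} Next I would recall the local picture of a constant field extension (see e.g. \cite{rosen}): for a place $\mathfrak{p}$ of $F$, with residue field $\bbF_{q^{\deg\mathfrak{p}}}$, and any place $\mathfrak{P}$ of $F'$ above $\mathfrak{p}$, one has $e(\mathfrak{P}/\mathfrak{p})=1$ and residue field $\bbF_{q^{\deg\mathfrak{p}}}\bbF_{q^n}=\bbF_{q^{\mathrm{lcm}(\deg\mathfrak{p},\,n)}}$. In particular every place of $F$ is unramified in $F'$, so the conductor of $F'/F$ is the zero divisor; hence the Artin symbol is defined at every place, the Artin map is defined on all of $D(F)$, and the zero divisor is a modulus of $F'/F$, provided the Artin map is trivial on the group $P_0(F)$ of principal divisors, which will drop out of the formula below.

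\textbf{The Artin symbol.} The final step is to identify $A_{F'/F}$. Fix a place $\mathfrak{p}$ of $F$ and a place $\mathfrak{P}$ above it. Since $F'/F$ is unramified at $\mathfrak{p}$, the symbol $\left(\frac{F'/F}{\mathfrak{p}}\right)$ is the unique element of $\Gal(F'/F)$ reducing to the generator of $\Gal(\kappa(\mathfrak{P})/\kappa(\mathfrak{p}))$ given by the $\#\kappa(\mathfrak{p})=q^{\deg\mathfrak{p}}$-power map. Under the identification $\Gal(F'/F)=\Gal(\bbF_{q^n}/\bbF_q)=\langle\varphi\rangle$, the automorphism inducing $x\mapsto x^{q^{\deg\mathfrak{p}}}$ on the constants $\bbF_{q^n}$ is precisely $\varphi^{\deg\mathfrak{p}}$, so $\left(\frac{F'/F}{\mathfrak{p}}\right)=\varphi^{\deg\mathfrak{p}}$. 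Extending multiplicatively, for $\mathfrak{d}=\sum_{\mathfrak{p}}a_{\mathfrak{p}}\mathfrak{p}\in D(F)$ we get $A_{F'/F}(\mathfrak{d})=\prod_{\mathfrak{p}}\varphi^{a_{\mathfrak{p}}\deg\mathfrak{p}}=\varphi^{\deg\mathfrak{d}}$. Since principal divisors have degree $0$, the Artin map kills $P_0(F)$, completing the verification that the zero divisor is a modulus; its image is all of $\langle\varphi\rangle$ by F.~K.~Schmidt's theorem that $\deg\colon D(F)\to\bbZ$ is surjective, consistent with surjectivity of the Artin map from class field theory.

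\textbf{Main obstacle.} There is no deep difficulty here — everything is assembled from standard facts — and the only point that needs care is the exponent bookkeeping in the computation of $\left(\frac{F'/F}{\mathfrak{p}}\right)$: one must be careful that the local Frobenius at $\mathfrak{p}$ raises to the $q^{\deg\mathfrak{p}}$-th power (the size of the residue field), not the $q$-th, and that $\varphi$ is normalised as the $q$-power map on the full constant field $\bbF_{q^n}$; once these normalisations are fixed the rest is routine.
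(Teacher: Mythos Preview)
Your argument is correct and is the standard proof of this fact. There is nothing in the paper to compare it against: the paper does not prove this lemma but simply quotes it as \cite[Lemma~3.2]{hess-massierer}, so you have supplied a self-contained proof where the paper relies on an external reference.
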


\begin{proof}[Proof of Theorem~\ref{thm:hes}]
Let $E$ denote the ray class field corresponding to $H$ and suppose that the full constant field of $E$ is $\bbF_{q^s}$. Let $\mathfrak{d}$ be a divisor in $H$. Then $\mathfrak{d}$ is in the kernel of the Artin map for $E/F$. Therefore, $\mathfrak{d}$ is in the kernel of the Artin map for the constant subextension $\bbF_{q^s}F/F$ of degree $s$. By Lemma~\ref{lem:HM}, this implies that $s\mid\deg\mathfrak{d}$. We deduce that $s\mid r$, by the definition of $r$. We will complete the proof by showing that $r\mid s$. It suffices to show that $\bbF_{q^r}\subset E$. Let $\mathfrak{p}$ be a place in $H$, in other words a place that splits completely in $E/F$. Then $r\mid \deg\mathfrak{p}$, since $r$ is the greatest common divisor of the degrees of the divisors in $H$. Now Lemma~\ref{lem:HM} shows that $\frp$ splits completely in the degree $r$ constant extension $\bbF_{q^r}F/F$. Therefore, $\bbF_{q^r}\subset E$ by the Chebotarev density theorem.
\end{proof}

To complete the proof of Theorem~\ref{thm:constants} we need the following auxiliary results:

\begin{lemma}\label{lem:deg}
Let $L/\bbF_q(t)$ be a finite extension and let $\alpha\in L^\times$. Then
\[\deg(\alpha)=-\sum_{\frp \mid\infty}\ord_\frp \alpha\cdot \deg \frp.\]
\end{lemma}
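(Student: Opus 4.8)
The plan is to reduce the statement to a standard fact about divisors of degree zero on a function field, exploiting the structure of the infinite places of $\bbF_q(t)$. First I would recall the product formula (or, equivalently, that every principal divisor has degree zero): for $\alpha\in L^\times$ the divisor $\div(\alpha)=\sum_{\frp}\ord_\frp(\alpha)\,\frp$ (the sum over \emph{all} places $\frp$ of $L$, both finite and infinite) satisfies $\deg\div(\alpha)=\sum_{\frp}\ord_\frp(\alpha)\deg\frp=0$. Splitting this sum into finite and infinite parts gives
\[
\sum_{\frp\nmid\infty}\ord_\frp(\alpha)\deg\frp \;=\; -\sum_{\frp\mid\infty}\ord_\frp(\alpha)\deg\frp.
\]

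Next I would identify the left-hand side with $\deg(\alpha)$ in the sense used in the paper. Recall that, under the identification of $D_\infty(L)$ with the group $I_L$ of nonzero fractional ideals of $\cO_L$ set up earlier in the excerpt, the fractional ideal $(\alpha)$ corresponds to the finite part of $\div(\alpha)$, namely $\sum_{\frp\nmid\infty}\ord_\frp(\alpha)\,\frp$, and by definition its degree is the degree of that divisor, i.e.\ $\sum_{\frp\nmid\infty}\ord_\frp(\alpha)\deg\frp$. Here one should be slightly careful about normalisations: the degree of a place of $L$ is taken relative to the constant field $\bbF_q$, and since $L$ has full constant field $\bbF_{q^f}$ one is implicitly using $\deg$ relative to $\bbF_q$ throughout; this is consistent with how $\deg$ is used in the counting functions, so no rescaling is needed. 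Combining this identification with the displayed equation yields exactly $\deg(\alpha)=-\sum_{\frp\mid\infty}\ord_\frp(\alpha)\deg\frp$.

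I do not expect a serious obstacle here; the only point requiring a little care is bookkeeping of which places count as ``infinite'' — namely those lying above the place $\infty$ of $\bbF_q(t)$ — and checking that the finite part of $\div(\alpha)$ really does coincide with the fractional ideal $(\alpha)$ of $\cO_L$ under the identification fixed in Section~\ref{Lemmas}. This last point is essentially the statement that $\cO_L$, the integral closure of $\bbF_q[t]$ in $L$, is the intersection of the valuation rings at the finite places, together with the compatibility of $\ord_\frp$ with $\frp$-adic valuations; both are standard for Dedekind domains and are already invoked implicitly in the excerpt. So the proof is just: apply the product formula to $\alpha$, separate finite from infinite places, and translate into the ideal-theoretic language.
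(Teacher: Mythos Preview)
Your proposal is correct and follows exactly the same approach as the paper: use that $\deg\div(\alpha)=0$, split $\div(\alpha)$ into its finite and infinite parts, and identify the degree of the finite part with $\deg(\alpha)$ via the correspondence between fractional ideals of $\cO_L$ and finite divisors set up in Section~\ref{Lemmas}. The paper's proof is just a terser version of what you wrote, without the extra remarks on normalisation.
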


\begin{proof}
Recall that by the degree of a fractional ideal of $\cO_L$, we mean the degree of the associated divisor of $L$, as explained in Section~\ref{Lemmas}. The divisor corresponding to $(\alpha)=\prod_{\frp\nmid\infty}\frp^{\ord_\frp\alpha}$ is $\sum_{\frp\nmid\infty}\ord_\frp\alpha\cdot \frp$. Moreover,
\[\div\alpha=\sum_{\frp}\ord_\frp\alpha\cdot \frp =\sum_{\frp\nmid\infty}\ord_\frp\alpha\cdot \frp+\sum_{\frp\mid\infty}\ord_\frp\alpha\cdot \frp .\]
Taking degrees yields the result since $\deg(\div\alpha)=0$.
\end{proof}

\begin{lemma}\label{lem:Hg contains deg h}
$\Hg$ contains an ideal of degree $h$.
\end{lemma}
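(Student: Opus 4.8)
The plan is to produce an explicit element of $\Hg$ of degree exactly $h$, exploiting the fact that $h=\gcd\{\deg\frp\mid\frp\mid\infty\}$ in $L$ and that degrees of fractional ideals of $\cO_L$ are governed by the infinite places via Lemma~\ref{lem:deg}. First I would recall the definition
\[\Hg=\{\mathfrak a\in I_L\mid N_{L/K}\mathfrak a=(N_{L/K}(\alpha))\textrm{ for some }\alpha\in L^\times\},\]
and note that it already contains every principal fractional ideal $(\alpha)$ of $\cO_L$ with $\alpha\in L^\times$ (take the same $\alpha$ on both sides, since $N_{L/K}(\alpha)$ is then visibly an element of $K^\times$ whose ideal is $N_{L/K}(\alpha)$). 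So it suffices to exhibit a principal fractional ideal $(\alpha)$, $\alpha\in L^\times$, of degree $h$; in fact, by Lemma~\ref{lem:deg} the degree of $(\alpha)$ equals $-\sum_{\frp\mid\infty}\ord_\frp\alpha\cdot\deg\frp$, so I need $\alpha\in L^\times$ with $\sum_{\frp\mid\infty}\ord_\frp\alpha\cdot\deg\frp=-h$.

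The heart of the argument is therefore: show that the subgroup of $\bbZ$ generated by $\{\deg\frp:\frp\mid\infty\}$ — which is $h\bbZ$ by definition of $h$ — is actually realised by $L/K$-norms, i.e.\ the image of the map $L^\times\to\bbZ$, $\alpha\mapsto -\sum_{\frp\mid\infty}\ord_\frp\alpha\cdot\deg\frp$, contains $h$. To see this, consider the divisor class group of $L$ and the exact sequence relating principal divisors to the degree map; equivalently, work with the group of divisors supported at the infinite places of $L$. By weak approximation / the surjectivity of the map from principal divisors onto degree-zero divisor classes (equivalently, because the divisor class group has degree map surjecting onto $\bbZ$ with finite kernel $\Pic^0$), for each infinite place $\frp$ there is some power of the class of $\frp$, together with a compensating divisor, that is principal. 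More directly: pick infinite places $\frp_1,\dots,\frp_r$ and integers $n_1,\dots,n_r$ with $\sum_i n_i\deg\frp_i=h$ (possible since the $\deg\frp_i$ generate $h\bbZ$). The divisor $\mathfrak d=\sum_i n_i\frp_i$ has degree $h$; it need not be principal, but some positive multiple $m\mathfrak d$ differs from a principal divisor by a divisor of degree $0$, and by the finiteness of $\Pic^0(L)$ one can clear the degree-zero part. The clean way to package this is: the image of $\deg:\Pic(L)\to\bbZ$ is all of $\bbZ$, and $h\bbZ$ is the image of the degree map restricted to divisor classes supported at infinity; since every degree-zero class is torsion, a suitable multiple is principal, hence supported at infinity classes of total degree $h$ hit $h$ in the class group, and then an honest principal divisor of degree $h$ supported at infinity exists. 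Its "finite part is empty" so the associated fractional ideal $(\alpha)$ of $\cO_L$ — where $\alpha$ generates that principal divisor — might have a finite part; but replacing $\alpha$ by an $\cO_L$-unit-adjusted representative and absorbing finite contributions is fine because Lemma~\ref{lem:deg} only sees the infinite valuations, so $\deg(\alpha)=h$ regardless.

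The step I expect to be the main obstacle is making precise the claim that $h$ (rather than merely some multiple of $h$) lies in the image of $\alpha\mapsto\deg(\alpha)$: a priori, principality forces us to balance the infinite-place divisor against finite places, and one must check nothing obstructs achieving the gcd exactly. The resolution is that $\cO_L$ is a Dedekind domain with finite ideal class group: given any divisor $\mathfrak d$ of degree $h$ supported at infinity, a suitable positive multiple of the class of each finite prime appearing in a correction term can be made principal, but more simply one argues that the natural map (divisors supported at infinity)$\to\Pic(L)$ composed with $\deg$ has image $h\bbZ$, while the kernel of $\deg$ on $\Pic(L)$ is finite, so by multiplying by the order of a relevant class one lands on an actual principal divisor of degree $h\cdot(\textrm{that order})$ — and here one must be slightly careful to still land on degree exactly $h$. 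A cleaner route, which I would ultimately adopt, is to invoke the containment $P_\infty(L)\subset\Hg$ already noted in the text (from \cite[\S3]{cohen_odoni}) together with the fact that $P_\infty(L)$, being of finite index in $I_L$ with quotient equal to a ray class group, has the property that its divisor-degree image is exactly $h\bbZ$: indeed the infinite places of $L$ have degrees with gcd $h$, they generate a rank-one image in $\cl_\infty(L)$ under the degree map by Proposition~\ref{prop:finite}-type reasoning, and Lemma~\ref{lem:deg} identifies $\deg$ on $I_L$ with this infinite-valuation pairing, forcing the degree subgroup of $P_\infty(L)$, hence of $\Hg$, to be exactly $h\bbZ$; in particular $\Hg$ contains an element of degree $h$.
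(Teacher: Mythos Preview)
Your setup is exactly the paper's: principal fractional ideals lie in $\Hg$, and by Lemma~\ref{lem:deg} the degree of $(\alpha)$ equals $-\sum_{\frp\mid\infty}\ord_\frp(\alpha)\deg\frp$, so one only needs $\alpha\in L^\times$ with prescribed infinite valuations summing (with weights $\deg\frp$) to $-h$. But the ``main obstacle'' you then wrestle with is a phantom. Having chosen integers $a_i$ with $\sum_i a_i\deg\frp_i=h$ by B\'ezout, the paper simply invokes the approximation theorem to pick $\alpha\in L^\times$ with $\ord_{\frp_i}(\alpha)=-a_i$ for each infinite place $\frp_i$; this is immediate since the $\frp_i$ are finitely many pairwise inequivalent valuations. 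No principality of the infinite divisor $\sum_i a_i\frp_i$ is required, no multiples need be taken, and $\Pic^0$ plays no role: the finite part of $\div(\alpha)$ is whatever it is, and Lemma~\ref{lem:deg} guarantees its degree is $h$ on the nose. Your long detour through $\Pic(L)$ and torsion classes is therefore unnecessary.

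More seriously, the ``cleaner route'' you propose at the end is incorrect. You claim that the degree image of $P_\infty(L)$ is $h\bbZ$, but in fact every element of $P_\infty(L)$ has degree $0$: if $\beta\equiv 1\pmod{\frp}$ for an infinite place $\frp$, then $\ord_\frp(\beta-1)\geq 1$ while $\ord_\frp(1)=0$, so by the ultrametric inequality $\ord_\frp(\beta)=0$. Hence $\deg(\beta)=-\sum_{\frp\mid\infty}\ord_\frp(\beta)\deg\frp=0$ by Lemma~\ref{lem:deg}. So $P_\infty(L)$ contributes nothing toward producing an ideal of degree $h$, and that line of argument cannot be salvaged. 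Stick with the direct approximation argument; that is precisely what the paper does, in three lines.
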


\begin{proof}
Let $\frp_1, \dots, \frp_n$ be the infinite places of $L$ and let $a_1,\dots , a_n\in \bbZ$ be such that 
\begin{equation}\label{eq:degh}
\sum_{i=1}^n{a_i\deg \frp_i}=h.
\end{equation}
Choose $\alpha\in L^\times$ such that $\ord_{\frp_i}\alpha= -a_i$ for $i=1,\dots, n$. The principal fractional ideal $(\alpha)$ of $\cO_L$ is in $\Hg$ by definition of $\Hg$. It follows from Lemma~\ref{lem:deg} that $\deg(\alpha)=h$.
\end{proof}

\begin{lemma}\label{lem:h divides deg}
 Let $\mathfrak a\in \Hl$. Then $h \mid \deg \mathfrak a$.
\end{lemma}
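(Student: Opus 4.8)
The plan is to express $\deg\mathfrak a$ as $-\ord_{\infty_K}(\beta)$ for a suitable $\beta$ and then exploit the fact that $\beta$ is a local norm at the infinite place of $K$. Throughout I write $K=\bbF_q(s)$ with $s=t^{p^{-i}}$, so $\cO_K=\bbF_q[s]$ has constant field $\bbF_q$; since $K/\bbF_q(t)$ is purely inseparable, by \cite[Lemma~7.3]{rosen} there is a unique place $\infty_K$ of $K$ above $\infty$, and its residue field is $\bbF_q$, so $\deg\infty_K=1$. By definition of $\Hl$, the hypothesis $\mathfrak a\in\Hl$ gives $N_{L/K}\mathfrak a=(\beta)$ for some $\beta\in K^\times\cap N_{L/K}\bbA_L^\times$.

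First I would check that the ideal norm preserves degrees. For a finite place $\mathfrak q$ of $L$ lying over the place $\mathfrak p$ of $K$ one has $N_{L/K}\mathfrak q=\mathfrak p^{\,f_{\mathfrak q/\mathfrak p}}$, so $\deg_K(N_{L/K}\mathfrak q)=f_{\mathfrak q/\mathfrak p}\deg_K\mathfrak p=\deg_L\mathfrak q$; summing over the prime factors of $\mathfrak a$ gives $\deg\mathfrak a=\deg_K(N_{L/K}\mathfrak a)=\deg_K(\beta)$. Applying Lemma~\ref{lem:deg} to the (trivial) extension $K/\bbF_q(s)$ --- whose only infinite place is $\infty_K$, of degree $1$ --- or equivalently using that $\deg(\div\beta)=0$, we get $\deg_K(\beta)=-\ord_{\infty_K}(\beta)$. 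Hence it remains to prove $h\mid\ord_{\infty_K}(\beta)$.

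For this I would use the hypothesis $\beta\in N_{L/K}\bbA_L^\times$ at the single place $\infty_K$: there are $\gamma_{\mathfrak q}\in L_{\mathfrak q}^\times$, one for each place $\mathfrak q$ of $L$ above $\infty_K$, with $\beta=\prod_{\mathfrak q\mid\infty_K}N_{L_{\mathfrak q}/K_{\infty_K}}(\gamma_{\mathfrak q})$. Taking $\ord_{\infty_K}$ and using the standard local norm formula $\ord_{\infty_K}\bigl(N_{L_{\mathfrak q}/K_{\infty_K}}(\gamma_{\mathfrak q})\bigr)=f_{\mathfrak q/\infty_K}\ord_{\mathfrak q}(\gamma_{\mathfrak q})$, where $\ord_{\mathfrak q}$ is the normalised valuation on $L_{\mathfrak q}$ and $f_{\mathfrak q/\infty_K}$ the residue degree, yields $\ord_{\infty_K}(\beta)=\sum_{\mathfrak q\mid\infty_K}f_{\mathfrak q/\infty_K}\ord_{\mathfrak q}(\gamma_{\mathfrak q})$. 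Because $\deg\infty_K=1$, for each infinite place $\mathfrak q$ of $L$ (which is exactly a place of $L$ above $\infty_K$) we have $\deg_L\mathfrak q=f_{\mathfrak q/\infty_K}\deg\infty_K=f_{\mathfrak q/\infty_K}$, so $h=\gcd\{\deg_L\mathfrak q:\mathfrak q\mid\infty\}=\gcd\{f_{\mathfrak q/\infty_K}:\mathfrak q\mid\infty_K\}$ divides every term of the above sum, hence divides $\ord_{\infty_K}(\beta)$. Combining with the previous paragraph, $h\mid-\ord_{\infty_K}(\beta)=\deg\mathfrak a$, as desired.

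I expect the only real subtlety to be the bookkeeping of normalisations: one must take the valuations on $K_{\infty_K}$ and on the completions $L_{\mathfrak q}$, the residue degrees, and all divisor degrees consistently relative to the constant field $\bbF_q$ (note that $L$ itself has larger full constant field $\bbF_{q^f}$, which is why the degrees of infinite places of $L$ are visibly multiples of $f$), so that the identities $\deg_L\mathfrak q=f_{\mathfrak q/\mathfrak p}\deg_K\mathfrak p$ and $v_K\circ N_{L_{\mathfrak q}/K_{\infty_K}}=f_{\mathfrak q/\infty_K}\cdot v_{L_{\mathfrak q}}$ hold exactly. No new idea beyond the local norm formula and the vanishing of the degree of a principal divisor seems necessary.
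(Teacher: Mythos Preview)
Your argument is correct and follows essentially the same route as the paper's proof: pick $\beta$ with $N_{L/K}\mathfrak a=(\beta)$, compute $\deg(\beta)$ once via the ideal norm and once via Lemma~\ref{lem:deg} using that $\beta$ is a local norm at the infinite place(s), and compare. The only cosmetic difference is that you normalise all degrees over $\bbF_q$ (so that $\deg_L\mathfrak q=f_{\mathfrak q/\infty_K}$ and the ideal norm is degree-preserving), whereas the paper takes $\deg$ on $L$ relative to its full constant field $\bbF_{q^f}$, which introduces the factor $f$ in $\deg_K(\beta)=f\deg\mathfrak a$ and in $f_{\mathfrak q/\mathfrak p}\deg\mathfrak p=f\deg\mathfrak q$; since both $h$ and $\deg\mathfrak a$ rescale by the same factor $f$ under the change of convention, the divisibility $h\mid\deg\mathfrak a$ is unaffected, and your flagging of this bookkeeping point is exactly right.
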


\begin{proof}
Since $\mathfrak a \in \Hl$, there exists $\beta \in K^{\times} \cap N_{L/K} \mathbb{A}_L^{\times}$ with $N_{L/K} \mathfrak a = (\beta)$.
Write $\mathfrak a = \prod \mathfrak{q}_i^{a_i}$, where the $\mathfrak{q}_i$ are prime ideals in $\cO_L$ and the $a_i$ are integers. Now 
\begin{equation}\label{eq:beta}
(\beta)=N_{L/K} \mathfrak a = \prod N_{L/K} (\mathfrak{q}_i)^{a_i} =  \prod \frp_i^{a_i f_{\mathfrak{q}_i/\frp_i}}
\end{equation}
 where $\frp_i=\mathfrak{q}_i\cap \cO_K$. Recall that the full constant field of $K$ is $\bbF_q$ and the full constant field of $L$ is $\bbF_{q^f}$ so $f_{\mathfrak{q}_i/\frp_i}\deg\frp_i=f\deg\mathfrak{q}_i$. Now taking degrees in \eqref{eq:beta} gives
\begin{equation}\label{eq:deg}
\deg(\beta)=\sum a_i f_{\mathfrak{q}_i/\frp_i}\deg\frp_i=f\sum a_i\deg\mathfrak{q}_i=f\deg\fra.
\end{equation}
Since $\beta \in K^{\times} \cap N_{L/K} \mathbb{A}_L^{\times}$, for every place $\frp$ of $K$ there exists $(\gamma_{\frq})_\frq \in \prod_{\frq \mid \frp}L_{\frq}^{{\times}}$ such that 
	\begin{equation}\label{eq:betalocnorm}
	\beta= \prod\limits_{\frq \mid \frp}N_{L_{\frq}/K_{\frp}} (\gamma_{\frq} ).
	\end{equation}
Therefore, 
\begin{equation}
\ord_\frp \beta=\sum_{\frq \mid \frp}\ord_\frp(N_{L_{\frq}/K_{\frp}} (\gamma_{\frq} ))=\sum_{\frq \mid \frp}f_{\frq/\frp}\ord_\frq\gamma_\frq
\end{equation}
whereby Lemma~\ref{lem:deg} gives
\begin{equation}\label{eq:deg2}
\deg(\beta)= -\sum_{\frp \mid \infty}\ord_\frp \beta \cdot\deg \frp=-\sum_{\frp \mid \infty}\deg \frp\sum_{\frq\mid \frp}f_{\frq/\frp}\ord_\frq \gamma_\frq =-f\sum_{\frq \mid \infty}\ord_\frq \gamma_\frq \cdot \deg \frq.
\end{equation}
Combining \eqref{eq:deg} and \eqref{eq:deg2} gives 
\[\deg\fra=-\sum_{\frq \mid \infty}\ord_\frq \gamma_\frq \deg \frq.\]
By definition of $h$, we have $h\mid \deg \frq$ for all infinite places $\frq$ of $L$. Therefore, $h\mid\deg\fra$.
\end{proof}

\begin{corollary}\label{cor:h is gcd} We have
$h=\gcd\{\deg \mathfrak a \mid \mathfrak a \in \Hg\}=\gcd\{\deg \mathfrak a \mid \mathfrak a\in \Hl\}$.
\end{corollary}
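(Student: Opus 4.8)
The plan is to deduce Corollary~\ref{cor:h is gcd} directly from Lemmas~\ref{lem:Hg contains deg h} and~\ref{lem:h divides deg}, together with the inclusion $\Hg\subseteq\Hl$ established earlier in this section. For brevity write $g_{\textup{glob}}=\gcd\{\deg\mathfrak a\mid \mathfrak a\in\Hg\}$ and $g_{\textup{loc}}=\gcd\{\deg\mathfrak a\mid \mathfrak a\in\Hl\}$; the goal is to show $g_{\textup{glob}}=g_{\textup{loc}}=h$.

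First I would check that $h$ divides every degree occurring on either side. Lemma~\ref{lem:h divides deg} gives $h\mid\deg\mathfrak a$ for every $\mathfrak a\in\Hl$, and since $\Hg\subseteq\Hl$ the same divisibility holds for every $\mathfrak a\in\Hg$. Hence $h\mid g_{\textup{glob}}$ and $h\mid g_{\textup{loc}}$. Next I would exhibit a single ideal realising degree $h$ on both sides: by Lemma~\ref{lem:Hg contains deg h} there is an ideal $\mathfrak a_0\in\Hg$ with $\deg\mathfrak a_0=h$, and $\mathfrak a_0\in\Hl$ as well because $\Hg\subseteq\Hl$. Consequently $g_{\textup{glob}}\mid h$ and $g_{\textup{loc}}\mid h$. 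Combining the two divisibilities yields $g_{\textup{glob}}=g_{\textup{loc}}=h$.

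I do not expect any genuine obstacle here: all the mathematical content has already been isolated in the two cited lemmas, and the corollary is essentially the bookkeeping step that packages them. The only point worth stating explicitly in the write-up is that the inclusion $\Hg\subseteq\Hl$ is what allows the same extremal ideal $\mathfrak a_0$ to serve simultaneously for both gcd computations, and that Lemma~\ref{lem:h divides deg}, proved for $\Hl$, covers $\Hg$ a fortiori.
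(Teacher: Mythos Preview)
Your proposal is correct and follows essentially the same approach as the paper's own proof: both arguments combine Lemma~\ref{lem:Hg contains deg h}, Lemma~\ref{lem:h divides deg}, and the inclusion $\Hg\subset\Hl$ to obtain the two-way divisibilities forcing all three quantities to coincide. The only cosmetic difference is the order in which the divisibilities are established.
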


\begin{proof}
Let $d_g= \gcd\{\deg \mathfrak a \mid \mathfrak a\in \Hg\}$ and $d_\ell=\gcd\{\deg \mathfrak a \mid \mathfrak a \in \Hl\}$.  By Lemma \ref{lem:Hg contains deg h}, $\Hg$ contains a ideal of degree $h$, whereby $d_g \mid h.$ Since $\Hg \subset \Hl,$ we also have $d_\ell \mid d_g$ and hence $d_\ell \mid h.$ By Lemma \ref{lem:h divides deg}, $h \mid \deg \fra$ for every $\fra \in \Hl$, whereby $h \mid d_{\ell}$ and hence $h=d_\ell=d_g.$ 
\end{proof}

Now Theorem~\ref{thm:constants} follows from Theorem~\ref{thm:hes} and Corollary~\ref{cor:h is gcd}. In addition, Theorem~\ref{IIB_el} follows from \eqref{eq:IIB_el} and Theorem~\ref{thm:constants}.

\subsection{Proof of Theorem~\ref{thm:main}}

By Theorem~\ref{thm:constants}, $L_{\textup{loc}}$ and $L_{\textup{glob}}$ both have full constant field $\bbF_{q^{fh}}$. Now taking the quotient of \eqref{eq:glob} by \eqref{eq:IIB_el} and letting $d\to \infty$ via multiples of $fh$ gives
\begin{equation}
\lim_{\substack{d\to \infty \\fh\mid d}}\frac{N_{\textup{glob}}(L/\bbF_q(t),\frn, d)}{N_{\textup{loc}}(L/\bbF_q(t),\frn, d)}=\frac{\kg}{\kl}\cdot \frac{ 1}{[L_{\textup{glob}}:L_{\textup{loc}}]}. 
\end{equation}
The following lemma completes the proof of Theorem~\ref{thm:main}:
\begin{lemma}\label{lem:knot}
The sequence
\[1\to \frac{\mathbb{F}_q^{\times}\cap N_{L/K}\mathbb{A}_L^{\times}}{\mathbb{F}_q^{\times}\cap N_{L/K}L^{\times}}\to\mathfrak{K}(L/K)\to \frac{\{(\beta)\mid \beta\in K^{\times}\cap N_{L/K}\mathbb{A}_L^{\times}\}}{\{(N_{L/K}(\alpha)) \mid \alpha\in L^{\times}\}}\to 1\]
is exact. Consequently, 
\[\# \mathfrak{K}(L/\bbF_q(t))  =\frac{\kl}{\kg}\cdot [L_{\textup{glob}}:L_{\textup{loc}}].\]
\end{lemma}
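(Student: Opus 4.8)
The plan is to build the exact sequence directly from the definitions, using the Artin-map isomorphisms $\Gal(L_{\textup{glob}}/L)\cong\Gg$ and $\Gal(L_{\textup{loc}}/L)\cong\Gl$ of Lemma~\ref{lem:altgps} to identify the rightmost term with $\Gal(L_{\textup{glob}}/L_{\textup{loc}})$, and Corollary~\ref{cor:kappas} to replace $L/K$ by $L/\bbF_q(t)$ in the leftmost term. First I would define the middle-to-right map: given $\alpha\in K^\times\cap N_{L/K}\bbA_L^\times$ representing a class in $\kn(L/K)$, Corollary~\ref{cor:eln} gives $(\alpha)\in N_{L/K}I_L$, so $(\alpha)$ lands in the numerator $\{(\beta)\mid\beta\in K^\times\cap N_{L/K}\bbA_L^\times\}$, and if $\alpha=N_{L/K}(\gamma)$ is a global norm then $(\alpha)=(N_{L/K}(\gamma))$ lies in the denominator; so $\alpha\mapsto(\alpha)$ is well defined on knot-group classes. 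Surjectivity is immediate since every $(\beta)$ in the numerator has $\beta\in K^\times\cap N_{L/K}\bbA_L^\times$. The kernel consists of classes of $\alpha$ with $(\alpha)=(N_{L/K}(\gamma))$ for some $\gamma\in L^\times$, i.e. $\alpha=u\cdot N_{L/K}(\gamma)$ with $u\in\bbF_q^\times$; then $u=\alpha/N_{L/K}(\gamma)\in\bbF_q^\times\cap N_{L/K}\bbA_L^\times$ (since both $\alpha$ and $N_{L/K}(\gamma)$ are everywhere local norms), and the class of $\alpha$ in $\kn(L/K)$ equals the class of $u$; conversely any such $u$ gives a kernel element. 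This identifies the kernel with $(\bbF_q^\times\cap N_{L/K}\bbA_L^\times)/(\bbF_q^\times\cap N_{L/K}L^\times)$, giving exactness.

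Next I would extract the numerical consequence. Exactness gives
\[
\#\kn(L/K)=\frac{\#(\bbF_q^\times\cap N_{L/K}\bbA_L^\times)}{\#(\bbF_q^\times\cap N_{L/K}L^\times)}\cdot\#\frac{\{(\beta)\mid\beta\in K^\times\cap N_{L/K}\bbA_L^\times\}}{\{(N_{L/K}(\alpha))\mid\alpha\in L^\times\}}.
\]
By Corollary~\ref{cor:kappas} the two cardinalities in the first factor are exactly $\kl$ and $\kg$, so the first factor is $\kl/\kg$. For the second factor, Lemma~\ref{lem:altgps} gives $\Gg=N_{L/K}I_L/\{(N_{L/K}(\alpha))\}$ and $\Gl=N_{L/K}I_L/\{(\beta)\mid\beta\in K^\times\cap N_{L/K}\bbA_L^\times\}$, so the displayed quotient is the kernel of the natural surjection $\Gg\twoheadrightarrow\Gl$, which under the Artin isomorphisms is $\Gal(L_{\textup{glob}}/L_{\textup{loc}})$ of order $[L_{\textup{glob}}:L_{\textup{loc}}]$. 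Finally Corollary~\ref{cor:knot} gives $\#\kn(L/\bbF_q(t))=\#\kn(L/K)$, yielding $\#\kn(L/\bbF_q(t))=\tfrac{\kl}{\kg}\,[L_{\textup{glob}}:L_{\textup{loc}}]$ as claimed.

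The only real subtlety — and the step I would be most careful about — is verifying the kernel computation, specifically that when $\alpha\in K^\times\cap N_{L/K}\bbA_L^\times$ and $(\alpha)=(N_{L/K}(\gamma))$ the ratio $u=\alpha/N_{L/K}(\gamma)$ genuinely lies in $\bbF_q^\times\cap N_{L/K}\bbA_L^\times$ and represents the same knot class as $\alpha$: this uses that $K^\times$-elements with trivial divisor are precisely $\bbF_q^\times$ (since $\cO_K=\bbF_q[t^{p^{-i}}]$ has trivial class group and units $\bbF_q^\times$), together with the fact that $N_{L/K}L^\times\subset K^\times\cap N_{L/K}\bbA_L^\times$, which makes $u$ a well-defined element of the knot group whose class equals that of $\alpha$. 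Everything else is formal manipulation of the class-field-theoretic dictionary already set up in the excerpt.
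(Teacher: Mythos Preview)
Your proof is correct and follows exactly the approach sketched in the paper: the map $\beta\mapsto(\beta)$ on the knot group, identification of the right-hand term with $\ker(\Gg\twoheadrightarrow\Gl)$ via Lemma~\ref{lem:altgps}, and the passage from $L/K$ to $L/\bbF_q(t)$ via Corollaries~\ref{cor:kappas} and~\ref{cor:knot}. You have simply spelled out in full the verification of exactness that the paper dismisses as ``easily verified'', including the correct observation that the units of $\cO_K=\bbF_q[t^{p^{-i}}]$ are $\bbF_q^\times$.
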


\begin{proof}
The right-hand map is given by $\beta\mapsto (\beta)$. The exactness of the sequence is easily verified. The right-hand term is the kernel of the natural surjection $\Gg\twoheadrightarrow \Gl$.
The size of this kernel is $\# \Gg / \# \Gl  =[L_{\textup{glob}}:L_{\textup{loc}}]$. Now the result follows by the definitions of $\kl$ and $\kl$ in \eqref{eq:kappas}, together with Corollaries~\ref{cor:kappas} and \ref{cor:knot}. 
\end{proof}

\nocite{rosen}

\bibliographystyle{plain}
\bibliography{HasseNorm}

\end{document}